\newcommand{\Nat}{\mathbb{N}}
\newcommand{\C}{\mathbb{C}}
\newcommand{\RS}{\widehat{\mathbb{C}}}
\newcommand{\tsim}[2][1.7]{
  \mathrel{\underset{#2}{\scalebox{#1}[1]{$\sim$}}}
}
\newcommand{\GOD}{\mathcal{D}} 
\newcommand{\brjuno}{\mathcal B}
\DeclareMathOperator{\Ar}{Area}    
\DeclareMathOperator{\supp}{supp}
\DeclareMathOperator{\inR}{\underline{r}}
\DeclareMathOperator{\outR}{\bar{r}}
\newtheorem{theorem}{Theorem}[section]
\newtheorem{lemma}[theorem]{Lemma}
\newtheorem{proposition}[theorem]{Proposition}
\newtheorem{corollary}[theorem]{Corollary}
\newtheorem*{theoremA}{Theorem A}
\newtheorem*{theoremB}{Theorem B}
\newtheorem*{theoremA'}{Theorem A'}
\newtheorem*{theoremB'}{Theorem B'}
\theoremstyle{definition}
\newtheorem{definition}[theorem]{Definition}
\newtheorem{remark}[theorem]{Remark}
\def\qedprop{\hfill{\hbox{%
  \hskip 1pt%
  \vrule width 7pt height 6pt depth 1.5pt%
  \hskip 1pt}} \vskip.3cm}
\author{
  \small N\'uria Fagella\footnote{Partially supported by spanish grants MTM2011-26995-C02-02 and MTM2014-52209-C2-2-P.}\\
  \small Dept.~de Mat.~Aplicada i An\`alisi \\
  \small Universitat de Barcelona \\
     \small Gran Via 585, 08007 Barcelona \\
  \small Catalunya, Spain \\
  \small \texttt{fagella@maia.ub.es}
  \and
  \small Christian Henriksen\footnote{Partially supported by the Danish Council for Independent Research grant DFF-4181-00502}\\
  \small Dept.~of Appl.~Math.~and Comp.~Science \\
  \small Technical University of Denmark \\
  \small Matematiktorvet \\
  \small Building 303 B, room 154 \\
  \small 2800 Kgs. Lyngby, Denmark\\
  \small \texttt{chrh@dtu.dk}
}
\title{The Fine Structure of Herman Rings}
\begin{document}
\maketitle

\abstract{We study the geometric structure of the boundary of Herman rings in a model family of Blaschke products of degree 3. Shishikura's quasiconformal surgery  relates the Herman ring to the Siegel disk of a quadratic polynomial. By studying the regularity properties of the maps involved, we can transfer McMullen's results on the fine local geometry of Siegel disks to the Herman ring setting. 
\renewcommand{\thefootnote}{}
\footnote{2000 {\em Mathematics Subject Classification}: Primary 
37F10. Secondary 30D20.}
\addtocounter{footnote}{-1}
}

\section{Introduction}

We consider the dynamical system induced by the iterates of a rational map $f: \RS \to \RS$ of degree $d\geq 2$, where $\RS$ denotes the Riemann sphere or compactified complex plane. We use the notation $f^n:=f\circ \overset{n)}{\cdots} \circ f $ to denote the $n^{th}$ iterate of $f$. Under this dynamics, the Riemann sphere splits into two completely invariant sets: the {\em Fatou set}, formed by those points for which the sequence $\{f^n\}$ is normal in some neighborhood; and its complement, the {\em Julia set}. By definition the Fatou set is open and therefore the Julia set is a compact set of the sphere. Connected components of the Fatou set, also known as {\em Fatou components}, map onto one another and are eventually periodic \cite{sul}.  The Julia set is the common boundary between the different Fatou components and, consequently, the dynamics on this set is chaotic. For background on the dynamics of rational maps we refer for example to \cite{carlesongamelin} and \cite{mil}.

An especially relevant particular case of rational maps are polynomials, which are exactly (up to M\"obius conjugation) those rational maps for which infinity is a fixed point and has no preimages other than itself. In particular this implies that infinity is a {\em superattracting fixed point}, and  the dynamics are locally conjugate to $z\mapsto z^d$ around this point for some $d\geq 2$, the degree of the polynomial; it also means that the basin of attraction of infinity, that is the set of points attracted to infinity under iteration, is connected and completely invariant. Therefore its boundary is compact in $\C$ and coincides with the Julia set of the polynomial. 

Periodic Fatou components of rational maps $f$ are completely classified \cite{fat}: a periodic component $U$ is either part of a  basin of attraction of an attracting or parabolic cycle, or a {\em rotation domain}, which means that  some iterate of $f|_U$ is conformally conjugate to a rigid rotation of irrational rotation number. Rotation domains may be simply connected and then they are called {\em Siegel disks}; or doubly connected,  in which case they are known as  {\em Herman rings}.  By definition, Herman rings separate the Julia set and have a disconnected boundary. 
 
The dynamics of a rational map is determined, to a large exent, by the orbit of  its {\em critical points}, i.e. the zeros of its derivative. Indeed, any basin of attraction must contain a critical point \cite{fat} and every boundary component of a rotation domain is accumulated by a {\em critical orbit} (i.e. the orbit of a critical point) \cite{fat,shi}. In some cases the relation is even stronger: any rotation domain with rotation number of bounded type, that is whose entries in its continued fraction are bounded (see Section \ref{setup}), have Jordan boundaries which actually {\em contain} a critical point  \cite{zhang}. In this work we will only consider rotation domains with this property.

Herman rings are undoubtably the least well known among all possible types of periodic Fatou components of rational maps, one reason being that they are not associated to any periodic point with a certain multiplier, as all other types are (basins of attraction or Siegel disks). Their closest relatives, Siegel disks, are much better understood, both in terms of conditions for their existence and in terms of the different properties that their boundaries possess. Relevant to our work will be, for instance,  the string of geometric results about the fine structure of Siegel disks, proven by McMullen in \cite{mcm}, like self-similarity of the Siegel disk around the critical point or measurable depth of the critical point in the filled Julia set (see Section \ref{prelimsSD}).   

But there is a procedure to relate Siegel disks and Herman rings,  known as {\em Shishikura's surgery} (see Section \ref{surg} and \cite{shi}). Roughly speaking, starting with a map that has a Herman ring $H$  (of a certain rotation number), this construction produces a map having a Siegel disk $S$ (of the same rotation number); at the same time it relates both functions via a quasiconformal map $\Phi$ which is a partial conjugacy between them. Intuitively, Shishikura's procedure erases the hole from the  Herman ring (and from all its preimages), substituting the dynamics here by a rigid rotation (see Figure \ref{figA}). The procedure is reversible, and it therefore ties certain problems about Herman rings (like existence for given rotation numbers) to the corresponding problem for Siegel disks. 

The quasiconformal map $\Phi$ mentioned above 
opens up a possibility to transfer geometric properties between Siegel disks and their corresponding Herman rings. Some of them are fairly obvious to transport: If the boundary of $S$ 
is a Jordan curve, so will be both boundaries of $H$; or they will all contain a critical point or none will. But other geometric properties are not necessarily preserved by general homeomorphisms or quasiconformal maps.

In this paper we study the extra regularity properties of the quasiconformal map $\Phi$ and use them to transfer some of McMullen's results about the fine geometry of Siegel disks to corresponding statements about Herman rings. These are the principal contents of Theorems A and B. In the latter, we additionally conclude that  the full boundary of the Herman ring is, surprisingly,  \emph{tightly similar}  to that of a Siegel disk, even though one of them is disconnected and the other is not. The concept of tight similarity, introduced here,  is stronger than regular similarity. In other words, zooming in around the critical point, the holes of the Herman ring tend to become invisible, until the Siegel disk and the Herman ring become indistinguishable form each other (see Figure \ref{figB}). 

Our study is done using a model family of rational maps of degree 3, which is one of the simplest that exhibits Herman rings of all rotation numbers. It also has the property that Shishikura's surgery relates it to the family of quadratic polynomials used in McMullen's results. However, in the same way that McMullen's properties also hold for quadratic-like mappings (maps which behave locally as a quadratic polynomial, see \cite{dhpolike}), our theorems also extend to appropriate rational-like maps (see Remark \ref{generalize}). 

\subsubsection*{Acknowledgements}
The authors are grateful to Carsten Lunde Petersen and Saeed Zakeri for their insightful comments. They also thank the Institut de Matemˆtica at Universitat de Barcelona and the {Department of Applied Mathematics and Computer Science of the Technical University of Denmark}  for their hospitality while this work was in progress.

%
%


\subsection{Setup and statement of results} \label{setup}

Arithmetics play an important role in dynamics as well. It is important to distinguish between three nested classes of irrational numbers.
For $a_1, a_2, \ldots \in \Nat$, we let
\[
  [a_1, a_2, \ldots] = \cfrac{1}{a_1 + \cfrac{1}{a_2 + \ddots}}
\]
denote the continued fractional expansion with $a_1, a_2, \ldots$ as coefficients, and denote the convergents by
\begin{equation} \label{convergents}
  \frac{p_n}{q_n} = [a_1, a_2, \ldots, a_n].
\end{equation}
See \cite{khi} for details. An irrational number $\theta$ is a \emph{quadratic irrational} if the sequence of coefficents $a_1, a_2, \ldots$ is eventually periodic.
The quadratic irrationals are exactly the irrational roots of quadratic equations with integer coefficients.

A more general set of irrational numbers are the irrationals of \emph{bounded type}.
They are numbers whose coefficients satisfy
$\sup a_n < \infty$.

An even more general class of irrationals is the class of  \emph{Brjuno numbers} which we denote by $\brjuno$. A Brjuno number is charaterized by the denominators of its convergents; a number is Brjuno if and only if $\sum \log q_{n+1} / q_n < \infty$.

These classes of irrational numbers are relevant to dynamics. If $f$ is a holomorphic map in a neighborhood of the origin, such that $f(0)=0$ and $f'(0)=  e^{2i\pi\alpha}$ with $\alpha \in \brjuno$, then 
there is a neighborhood of $0$ on which $f$ is conjugate to an irrational rotation of rotation $\alpha$ \cite{sie,bru,rus}. If the map is globally defined, this is part of a Siegel disk. Conversely, if a quadratic polynomial has an invariant Siegel disk, its center is a fixed point with multiplier $e^{2i\pi\alpha}$ with $\alpha\in\brjuno$ \cite{yoc}. 

Here and in the rest of the article, we fix an irrational number $\theta$ of bounded type. 
We let $\lambda = e^{2i\pi\theta}$, and fix the quadratic polynomial 
\begin{equation} \label{polyn}
P(z) = \lambda z + z^2.
\end{equation}
This polynomial has a unique critical point $\omega: = -\frac{\lambda}{2}$. The origin is a fixed point of multiplier $P'(0)= \lambda$.   We know that $P$ posseses a Siegel disk $S$ centered at $z=0$, because the numbers of bounded type form a subset of the Brjuno numbers.

On the rational end, we work with the simplest family that can exhibit Herman rings, namely
\[
  f_{a, b}(z) := b z^2 \frac{az+1}{z+a} ,
\]
for $a, b \in \C$.  Every $f_{a, b}$ has superattracting fixed points at the origin and at infinity.
Additionally, there are  two other criticial points which we denote by  $\omega_1$ and  $\omega_2$.

The family $f_{a, b}$ provides examples of Herman rings.
It is well known \cite{bru,shi,yoc}  that for any irrational $\alpha$, there exists $a, b$ such that $f_{a, b}$ has a Herman ring with rotation number $\alpha$, if and only if $\alpha$ is Brjuno.
In \cite{MR2190332}, Buff, Fagella, Geyer and Henriksen show that for a Brjuno number $\alpha$, there exists a pointed disk holomorphically embedded in the $a, b$ parameter space of $f_{a, b}$, such that every mapping in the disk possesses an invariant Herman ring with rotation number $\alpha$.  

From now on, we let $a, b$ be parameters chosen such that $f_{a, b}$ has an invariant Herman ring with rotation number $\theta$.
Since we shall not vary $a, b$, we drop the indexes and simply denote the rational map by $f$. Hence
\begin{equation} \label{rat}
 f(z) = b z^2 \frac{az+1}{z+a} ,
 \end{equation}
has a Herman ring of rotation number $\theta$, which we denote by $H$. 
The Herman ring has two boundary components $\partial^j H$, $j=1,2$, which are both quasicircles, each containing a critical point $\omega_j$ (this follows from results of Herman, Ghys, Douady, \'Swiatek and Shishikura, see e.g. \cite[Sections 7.2 and 7.3]{surbook}). 
We number the components such that $\partial^1 H$ is contained in the bounded component of the complement of $H$, and we number the critical points, such that $\omega_j \in \partial^j H$, $j=1,2$.

As mentioned, $f$ has a fixed critical point at infinty. Dynamically this means that infinity is a superattracting fixed point, the immediate attracting bassin of which we denote by $A_f(\infty)$.
The boundary of $A_f(\infty)$ is a proper subset of the Julia set $J(f)$.
This is in contrast to what happens for $P$, the quadratic polynomial in (\ref{polyn}), where the boundary of the basin of infinity, $A_P(\infty)$,  coincides with $J(P)$.

In \cite{MR854306}, Shishikura introduced a surgery that could turn a map with a cycle of Herman rings into a map with a cycle and Siegel disk and viceversa (see Section \ref{surg}). We will use a special case of his construction to show the first of our two main theorems.

\begin{theoremA}
 Let $\theta$ be of bounded type and let $P$ and $f$ be as in (\ref{polyn}) and (\ref{rat}) respectively. In the setup above,  there exists a $P$-invariant simply connected domain $D \Subset S$, and a quasiconformal mapping $\Phi : \RS \to \RS$, such that
  \begin{enumerate}[\rm (a)]
    \item $\Phi$ conjugates $P$ to $f$ on $\RS \setminus D$
    \item $\Phi$ maps $J(P)$ onto $\partial A_f(\infty)$, $\partial S$ onto $\partial^2 H$, and $\omega$ to $\omega_2$.
    \item $\bar{\partial} \Phi = 0$ a.e. outside $\GOD := \cup_{n=0}^\infty P^{-n}(D)$.
    \item $\Phi$ is $C^{1+\alpha}$-conformal at $\omega$ with $\Phi'(\omega)\neq 0$.
  \end{enumerate}
\end{theoremA}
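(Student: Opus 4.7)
The plan is to perform Shishikura's quasiconformal surgery in the direction Siegel disk $\to$ Herman ring, modifying $P$ only inside a small $P$-invariant disk $D\Subset S$ by pasting in superattracting dynamics, and then straightening by the Measurable Riemann Mapping Theorem. Let $\phi\colon S\to\{|w|<1\}$ be a linearization of $P|_S$, choose $r\in(0,1)$ so that $\operatorname{mod}(S\setminus\overline{\phi^{-1}(\{|w|\le r\})})=\operatorname{mod}(H)$, and set $D=\phi^{-1}(\{|w|\le r\})$. Following Shishikura's recipe, produce a quasi-regular map $F\colon\RS\to\RS$ of topological degree $3$ that equals $P$ outside $D$ and realises the superattracting dynamics of $f$ near $0$ inside $D$. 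Its dilatation is supported in a thin interpolation layer near $\partial D$, and pulling back under $P$ (holomorphic on $\RS\setminus D$) produces an $F$-invariant Beltrami form $\mu$ with $\|\mu\|_\infty<1$ and $\supp\mu\subseteq\GOD$.

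By the Measurable Riemann Mapping Theorem there is a quasiconformal $\Phi_0\colon\RS\to\RS$ with $\bar{\partial}\Phi_0=\mu\,\partial\Phi_0$, and $F_1:=\Phi_0\circ F\circ\Phi_0^{-1}$ is a rational map of degree $3$ with two superattracting fixed points and an invariant Herman ring of rotation number $\theta$ and modulus $\operatorname{mod}(H)$. Combinatorial rigidity for the bounded-type class identifies $F_1$ with $f$ up to M\"obius conjugacy; composing $\Phi_0$ with this M\"obius map yields $\Phi$. Parts (a) and (c) are immediate from $F=P$ on $\RS\setminus D$ and $\supp\mu\subseteq\GOD$; part (b) follows by tracking invariant loops and critical points, since $\partial S$ is the unique $P$-invariant Jordan curve bounding the Siegel disk and must be sent by $\Phi$ to the unique $f$-invariant outer boundary $\partial^2 H$ of the Herman ring, and the critical point $\omega\in\partial S$ is transported to the unique critical point of $f$ on $\partial^2 H$, namely $\omega_2$.

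Item (d) is the main obstacle. Since $\omega\in\partial S$ has $P$-orbit contained in $\partial S$, we have $\omega\notin\GOD$, so by (c) the map $\Phi$ is holomorphic on a neighbourhood of $\omega$ inside $\RS\setminus\GOD$; the difficulty is the accumulation on $\omega$, from the basin side, of components of $P^{-n}(D)$. The plan is to establish an area-decay estimate
\[
|\GOD\cap B(\omega,r)|\le C\,r^{2+\delta},\qquad\delta>0,
\]
and then to invoke the classical regularity theorem that a global quasiconformal map whose Beltrami coefficient vanishes outside such a set is $C^{1+\alpha}$-conformal at the point in question, with H\"older exponent $\alpha=\alpha(\delta,\|\mu\|_\infty)>0$ and non-degenerate derivative. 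The area bound is the crux: it will be deduced from the fine local geometry of the Siegel disk for bounded-type $\theta$, namely that $\partial S$ is a quasicircle through $\omega$ and the iterated preimages of $D$ accumulating on $\omega$ shrink in diameter at a uniform geometric rate while multiplying in number only polynomially per generation---the polynomial-side counterpart of McMullen's self-similarity analysis of Siegel disks, and the principal technical step of the proof.
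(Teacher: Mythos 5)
Your surgery runs in the wrong direction, and this creates two problems that the paper avoids by going the other way.

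First, the construction as described is internally inconsistent. You ask for a quasiregular $F$ of topological degree $3$ that \emph{equals $P$ outside} a $P$-invariant disk $D\Subset S$. But $P$ has degree $2$, and if $F=P$ on $\RS\setminus D$ then any $w$ outside $\overline{D}$ has exactly the two preimages $P^{-1}(w)$, so the degree of $F$ is $2$, not $3$. To raise the degree you must introduce a pole in $D$ whose image leaves $D$ (as happens for $f$, whose pole $-a$ lies in the bounded complementary component of $H$ and is mapped to $\infty$); in that case $D$ is no longer $F$-invariant and the phrase ``pasting in superattracting dynamics inside $D$'' no longer describes what is going on. The standard Shishikura surgery Siegel~$\to$~Herman is a gluing of two spheres along annuli inside two Siegel disks, not a local modification of $P$ inside a sub-disk. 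Second, even granting a correct construction, the identification of the straightened map $F_1$ with the \emph{given} $f$ is not trivial: there is no cited ``combinatorial rigidity'' statement for degree-$3$ rational maps with a Herman ring of prescribed rotation number and modulus. By contrast, the paper runs the surgery Herman~$\to$~Siegel, producing a quadratic polynomial $Q$ with a fixed Siegel point of multiplier $e^{2\pi i\theta}$; quadratic polynomials are rigid, so $Q=P$ immediately. This single choice of direction eliminates both difficulties.

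On item (d), you have the correct overall scheme: an area-decay estimate $|\GOD\cap B(\omega,r)|\le Cr^{2+\delta}$, followed by a regularity theorem (this is indeed \cite[Theorem 2.19]{mcm2}, Theorem~\ref{reg} in the paper). But the crucial input is not ``McMullen's self-similarity analysis'' (that is only used later, in Theorem B(b)); it is his \emph{measurable deepness} result, Corollary~4.5 of \cite{mcm} (Theorem~\ref{deep} here), which states that $\omega$ is a measurable deep point of $S_\epsilon$. Your heuristic --- geometric shrinking of preimages with polynomial multiplicity --- gives, at best, that $\GOD$ has finite area, not the power-saving bound $r^{2+\delta}$ near $\omega$; and you never connect the estimate for $S_\epsilon$ to the set $\GOD$, which the paper does by noting $S_\epsilon\setminus(D\cup J(P))\subset\C\setminus\GOD$. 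As written, the ``principal technical step'' remains a gap, and the attribution points at the wrong theorem.
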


The notion of $C^{1+\alpha}$-conformal is due to McMullen and is defined as follows.
\begin{definition}[$C^{1+\alpha}$ conformality]
  A mapping $\phi$ is $C^{1+\alpha}$-conformal at $z_0$ if there exists $\alpha, \delta, M > 0$  such that
  \[
    \phi(z) = \phi(z_0) + \phi'(z_0) (z - z_0) + R(z)
  \]
  where
  \[
    |R(z)| \leq M |z - z_0|^{1+\alpha}
  \]
  when $|z-z_0| < \delta$.

 Note that this is stronger that saying that $\phi$ is $\C-$differentiable at the point $z_0$. We say $\phi$ is $C^{1+\alpha}$-anticonformal at $z_0$, if $\bar{\phi}$ is $C^{1+\alpha}$-conformal at $z_0$.
\end{definition}

Theorem A is illustrated in Figure \ref{figA}.
\begin{figure}[hbt!]
\captionsetup{width=0.8\textwidth}
  \begin{center}
    \hfil \includegraphics[width=0.45\textwidth]{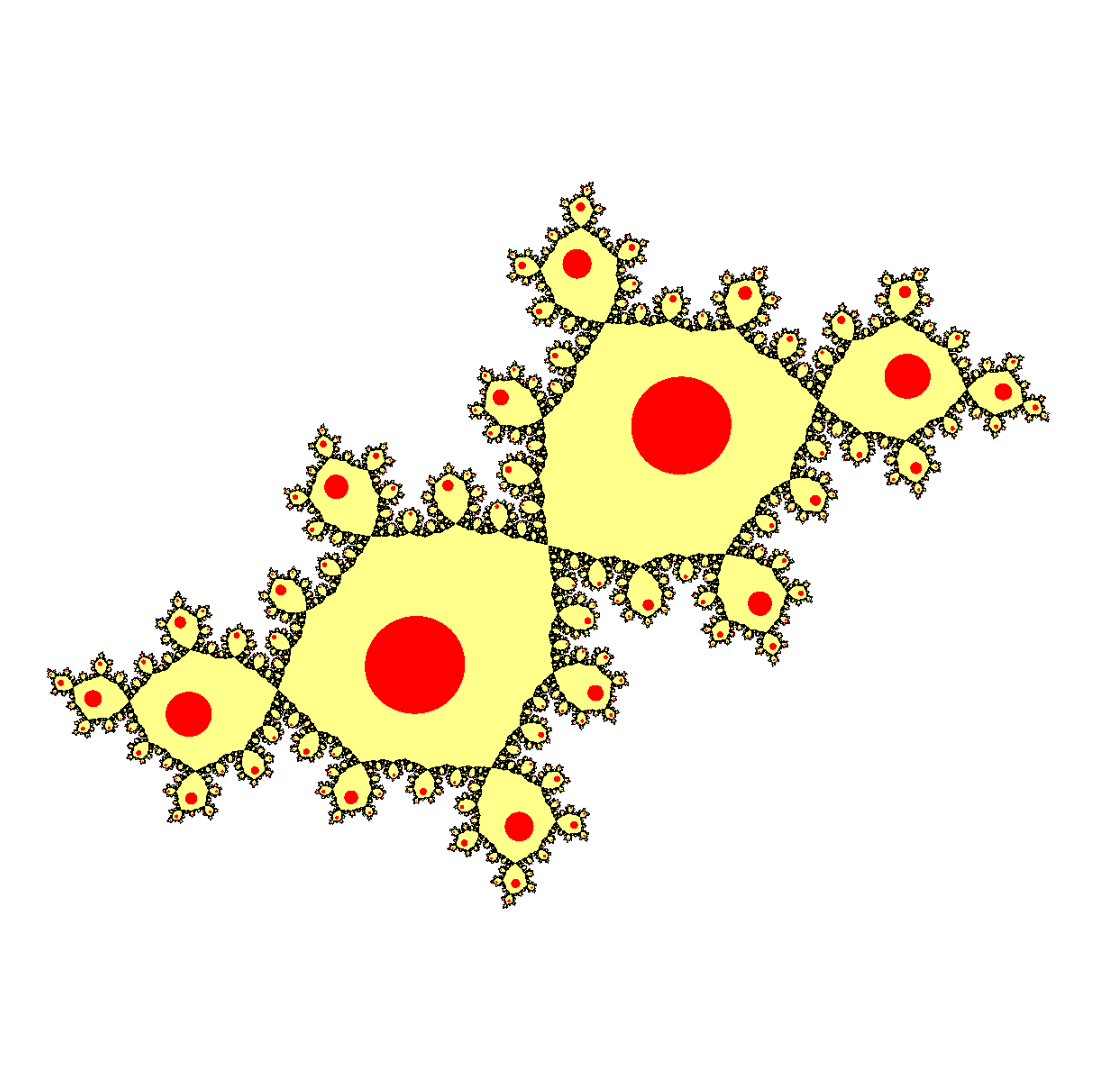} \hfil
    \includegraphics[width=0.45\textwidth]{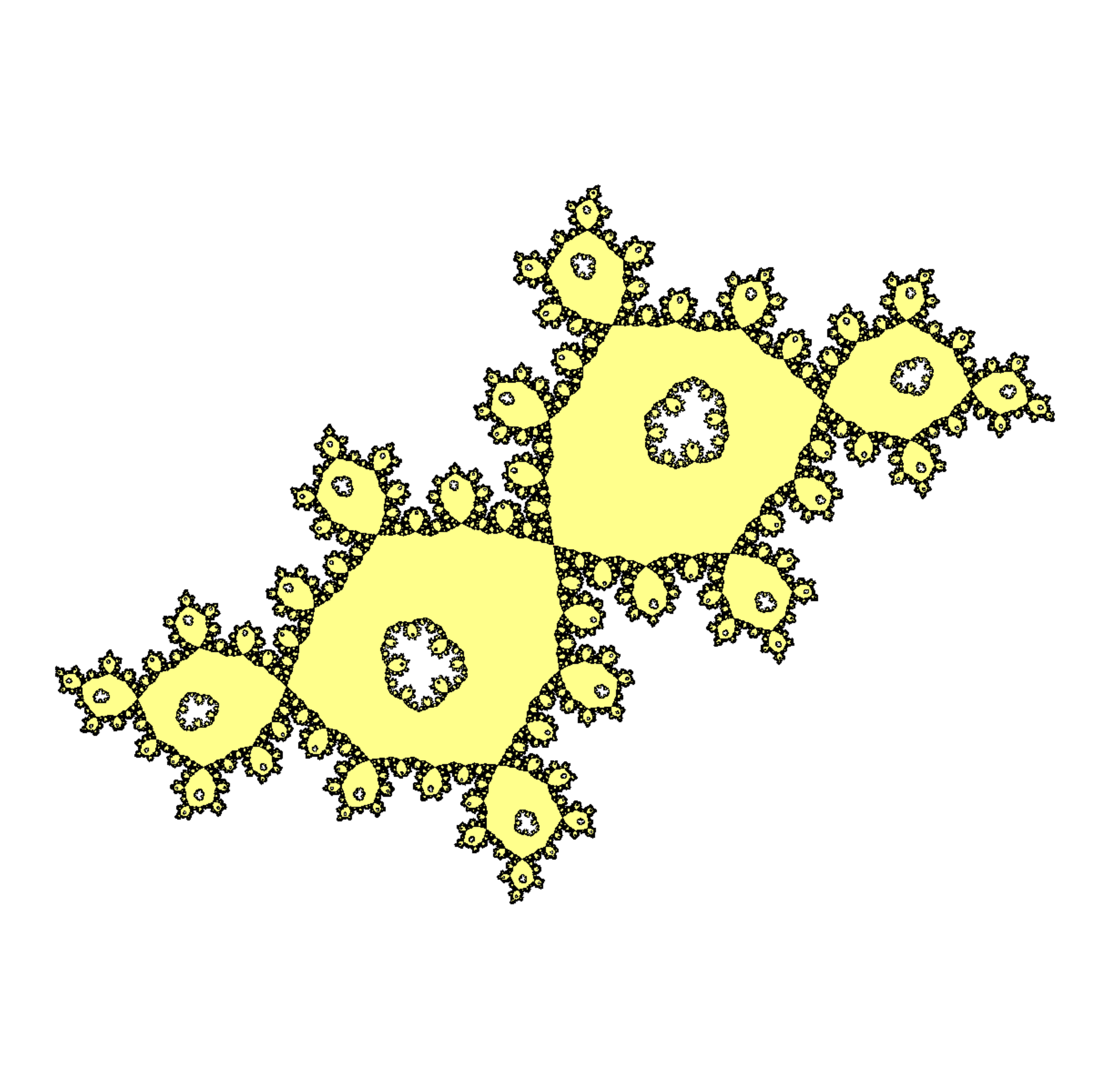} \hfil
  \setlength{\unitlength}{\textwidth}
  \put(-0.5,0.25){\vector(1,0){0.1}}
    \put(-0.46,0.26){$\Phi$}
    \put(-0.8,0.2){\small $S$}
    \put(-0.79,0.17){\scriptsize $D$}
    \put(-0.72,0.23){\scriptsize $\omega$}
    \put(-0.24,0.23){\scriptsize $\omega_2$}
    \put(-0.28,0.18){\small $H$}
    \put(-0.36,0.32){$A_f(\infty)$}
    \put(-0.84,0.32){$A_P(\infty)$}
    \put(-0.725,0.225){\circle*{0.01}}
    \put(-0.249,0.2225){\circle*{0.01}}
 \end{center}
\caption{\small We illustrate Theorem A for $\theta=\frac{\sqrt{5}-1}{2}$, the golden mean.
    The dynamics of $P$ is illustrated in the figure to the left, which is symmetric with respect to $\omega$.
    We have colored the completely invariant set $\GOD$ in red, and $J(P)$ in black.
    The dynamics of $f$ is illustrated in the figure to the right, which has been rotated, scaled and translated to illustrate the similarity with $P$.
    The Herman ring and its preimages are colored yellow and $J(f)$ black.
    By Theorem A, there exists a quasiconformal homeomorphism $\Phi$ which is conformal outside the red set. Outside $D$, $\Phi$ conjugates $P$ to $f$, and $\Phi$ is $C^{1+\alpha}$-conformal at $\omega$.} %
  \label{figA}
\end{figure}
The fact that the quasiconformal conjugacy provided by Theorem A is $C^{1+\alpha}$ at $\omega$ allows us to see that the boundary components of $H$ are locally similar to the boundary of $S$.
To make a precise statement, we introduce a notion of similarity that is stronger than Tan Lei's notion of asymptotic similarity introduced in \cite{MR1086745}.  Let $B(c,r)$ denote the open ball of center $c\in\C$ and radius $r>0$.

\begin{definition}[Tight similarity] \label{tight}
  We say that two compact sets $A, B$ are \emph{tightly similar} at $z_0$ if there exists $\delta, \beta>0$ and $L>0$ such that
  \begin{enumerate}[\rm (1)]
    \item $a \in A \cap B(z_0, \delta) \Rightarrow d(a, B) \leq L |a-z_0|^{1+\beta}$
    \item $b \in B \cap B(z_0, \delta) \Rightarrow d(b, A) \leq L |b-z_0|^{1+\beta}$
  \end{enumerate}
  When $A$ and $B$ are tightly similar at $z_0$, we write
  \[ 
    A \tsim{z_0} B .
  \]
  A compact set $A$ is {\em tightly self-similar} around $z_0\in A$ if 
  \[
  A-z_0 \tsim{0} \kappa(A-z_0)
  \]
  for some $\kappa \in \C$, with $|\kappa| >1 $.
\end{definition}
 It is easy to check that the notion of tight similarity at $z_0$ is an equivalence relation on the compact subsets of $\C$. We are now ready to state the second main theorem.

\begin{theoremB}
 Let $\theta$ be of bounded type and let $P$ and $f$ be as in (\ref{polyn}) and (\ref{rat}) respectively. In the setup above, the following are satisfied.
  \begin{enumerate}[\rm (a)]
    \item There exists a scaling factor $L \in \C \setminus \{0\}$ such that
      \[
        L(J(P)-\omega) \tsim{0} \partial A_f(\infty) - \omega_2
        \tsim{0} J(f) - \omega_2.
      \]
    \item When $\theta$ is a quadratic irrational, $\partial^2 H$ is tightly self-similar around $\omega_2$.
    \item The Siegel disk $S$ admits an Euclidean triangle with vertex at $\omega$, if and only if $H$ admits Euclidean triangle with vertex at $\omega_2$.
  \end{enumerate}
\end{theoremB}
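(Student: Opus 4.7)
The plan is to prove all three parts by transferring geometric statements between $P$ and $f$ via the quasiconformal conjugacy $\Phi$ of Theorem A, using its $C^{1+\alpha}$-conformality at $\omega$. For the first tight similarity in (a), set $L = \Phi'(\omega)$. Writing
\[
  \Phi(z) = \omega_2 + L(z-\omega) + R(z), \qquad |R(z)| \leq M |z-\omega|^{1+\alpha},
\]
and noting that $\Phi^{-1}$ is also $C^{1+\alpha}$-conformal at $\omega_2$ with derivative $1/L$, the conditions of Definition \ref{tight} for $L(J(P)-\omega)$ and $\partial A_f(\infty)-\omega_2 = \Phi(J(P))-\omega_2$ follow by direct estimation with $R$, taking $\beta=\alpha$. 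For the second similarity $\partial A_f(\infty)-\omega_2 \tsim{0} J(f)-\omega_2$, condition (1) of Definition \ref{tight} is immediate from $\partial A_f(\infty) \subset J(f)$. For condition (2) I would show that $J(f)$ and $\partial A_f(\infty)$ actually coincide in a small enough neighborhood $U$ of $\omega_2$: the only Fatou components of $f$ whose closures meet $U$ are $H$ and $A_f(\infty)$, since $\partial^1 H$, $A_f(0)$ and their preimages stay at positive distance from $\omega_2$. This is the most delicate step and would require a careful inventory of the Fatou components of $f$ accumulating on $\omega_2$, most likely by transporting the simpler structure $J(P) = \partial A_P(\infty)$ back through $\Phi$.

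For (b), I would invoke McMullen's self-similarity theorem \cite{mcm} for the boundary of a Siegel disk of quadratic-irrational rotation number, in the form yielding a $\kappa \in \C$ with $|\kappa|>1$ satisfying $\partial S -\omega \tsim{0} \kappa(\partial S -\omega)$. Two short lemmas -- that $C^{1+\alpha}$-conformal maps with nonzero derivative preserve tight similarity at the corresponding points, and that multiplication by a nonzero complex scalar preserves tight similarity at $0$ -- then let us push the self-similarity forward through $\Phi$ and rescale by $L$, yielding $\partial^2 H - \omega_2 \tsim{0} \kappa(\partial^2 H - \omega_2)$ with the same scaling factor $\kappa$.

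For (c), the key observation is that $H$ and $\Phi(S)$ coincide locally at $\omega_2$: both are bounded by the common arc $\partial^2 H = \Phi(\partial S)$ and both lie on its bounded side near $\omega_2$, while $\partial^1 H$ and the rest of the inner Fatou structure stay at positive distance. Given a Euclidean triangle $T \subset \overline S$ with vertex $\omega$, the image $\Phi(T)$ is a region with vertex $\omega_2$ differing from the affine triangle $\omega_2 + L(T-\omega)$ by $O(|z-\omega|^{1+\alpha})$; for $T$ small enough the linear part dominates and one inscribes a genuine Euclidean triangle with vertex $\omega_2$ inside $\Phi(T) \cap U \subset H$. The reverse implication follows symmetrically via $\Phi^{-1}$. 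The main obstacle is the second tight similarity in (a): all other steps reduce to straightforward estimates with the $C^{1+\alpha}$-error, whereas isolating $\partial A_f(\infty)$ from the rest of $J(f)$ near $\omega_2$ requires genuine dynamical input about $f$ beyond what $\Phi$ alone provides.
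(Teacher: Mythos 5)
Your overall strategy --- transferring geometric information through the map $\Phi$ of Theorem~A using its $C^{1+\alpha}$-conformality at $\omega$ --- is the same as the paper's, and parts (b) and (c) plus the first tight similarity in (a) go through essentially as you describe. The genuine gap is in your treatment of the second similarity $\partial A_f(\infty)-\omega_2 \tsim{0} J(f)-\omega_2$, which you yourself flag as the delicate step.

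You propose to show that $J(f)$ and $\partial A_f(\infty)$ coincide in a small neighborhood $U$ of $\omega_2$, on the grounds that $\partial^1 H$, $A_f(0)$ and their preimages stay at positive distance from $\omega_2$. This premise is false: $\omega_2$ lies in $J(f)$, and for a rational map of degree at least two the backward orbit of any point outside the exceptional set accumulates on the entire Julia set, so preimage components of $H$ and of $A_f(0)$ do accumulate at $\omega_2$. Consequently $J(f)\setminus\partial A_f(\infty)$ meets every neighborhood of $\omega_2$; the paper itself emphasizes this in the discussion of Figure~2 ($J(f)$ and $\partial A_f(\infty)$ are ``topologically very different'', with components of $J(f)\setminus\partial A_f(\infty)$ of ever smaller diameter as one zooms into $\omega_2$). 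A proof based on local coincidence cannot work.

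What is actually needed is a \emph{rate} estimate: the extra pieces of $J(f)$ near $\omega_2$ must shrink super-linearly. The paper achieves this on the $P$-side, proving $J(P)\tsim{\omega}\overline{\GOD}$ (Claims 1--5) and then transporting through $\Phi$, using Proposition~\ref{nested} together with the inclusions $\partial A_f(\infty)\subset J(f)\subset\Phi(\overline{\GOD})$. The dynamical content of those claims is precisely what your outline lacks: Lemma~\ref{omegadeep} (measurable deepness of $\omega$ in $\C\setminus\GOD$) bounds $\Ar(\GOD\cap B(\omega,r))$ by $Mr^{2+\beta}$, and Koebe distortion plus Gr\"otzsch's inequality (Claims 1--3) convert this area bound into the distance bound $d(z,J(P))\le K|z-\omega|^{1+\beta/2}$ for $z\in\GOD$ near $\omega$. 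Your closing remark about transporting $J(P)=\partial A_P(\infty)$ back through $\Phi$ points in the right direction, but without this depth-to-distance machinery the gap is not closed, and the coincidence claim cannot serve as a substitute.
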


Theorem B is illustrated in Figure \ref{figB}. There we can see the simililary between $J(P), \partial A_f(\infty)$ and $J(f)$.
Even though $J(f)$ and $\partial A_f(\infty)$ are topologically very different, the components of $J(f) \setminus \partial A_f(\infty)$ have increasingly small diameters, and get increasingly close to $A_f(\infty)$   as we zoom in at $\omega_2$.

\begin{figure}[hbtp!]
\captionsetup{width=0.8\textwidth}
  \begin{center}
    \begin{tabular}{cc}
    \includegraphics[width=0.37\textwidth]{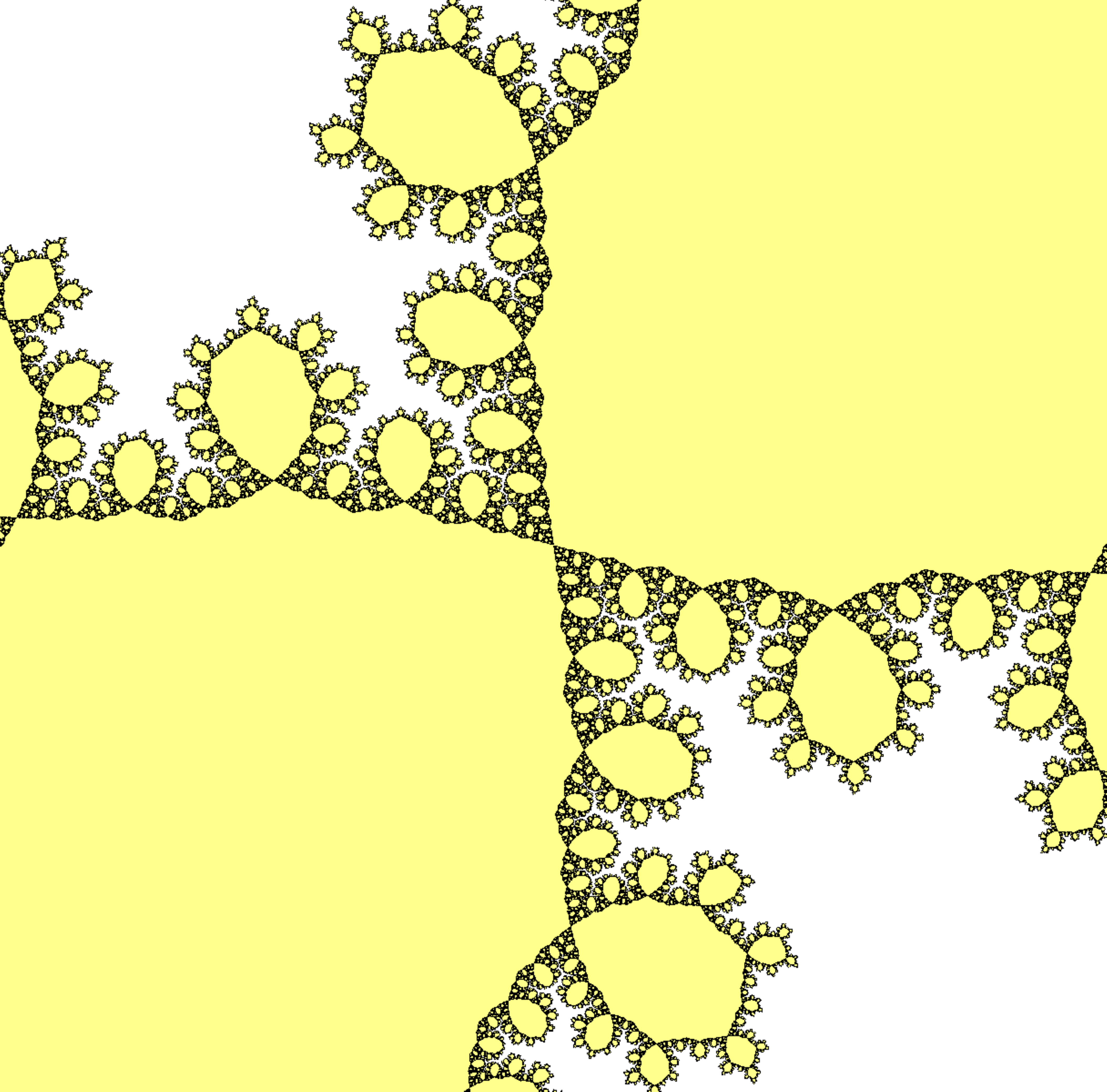} &\includegraphics[width=0.37\textwidth]{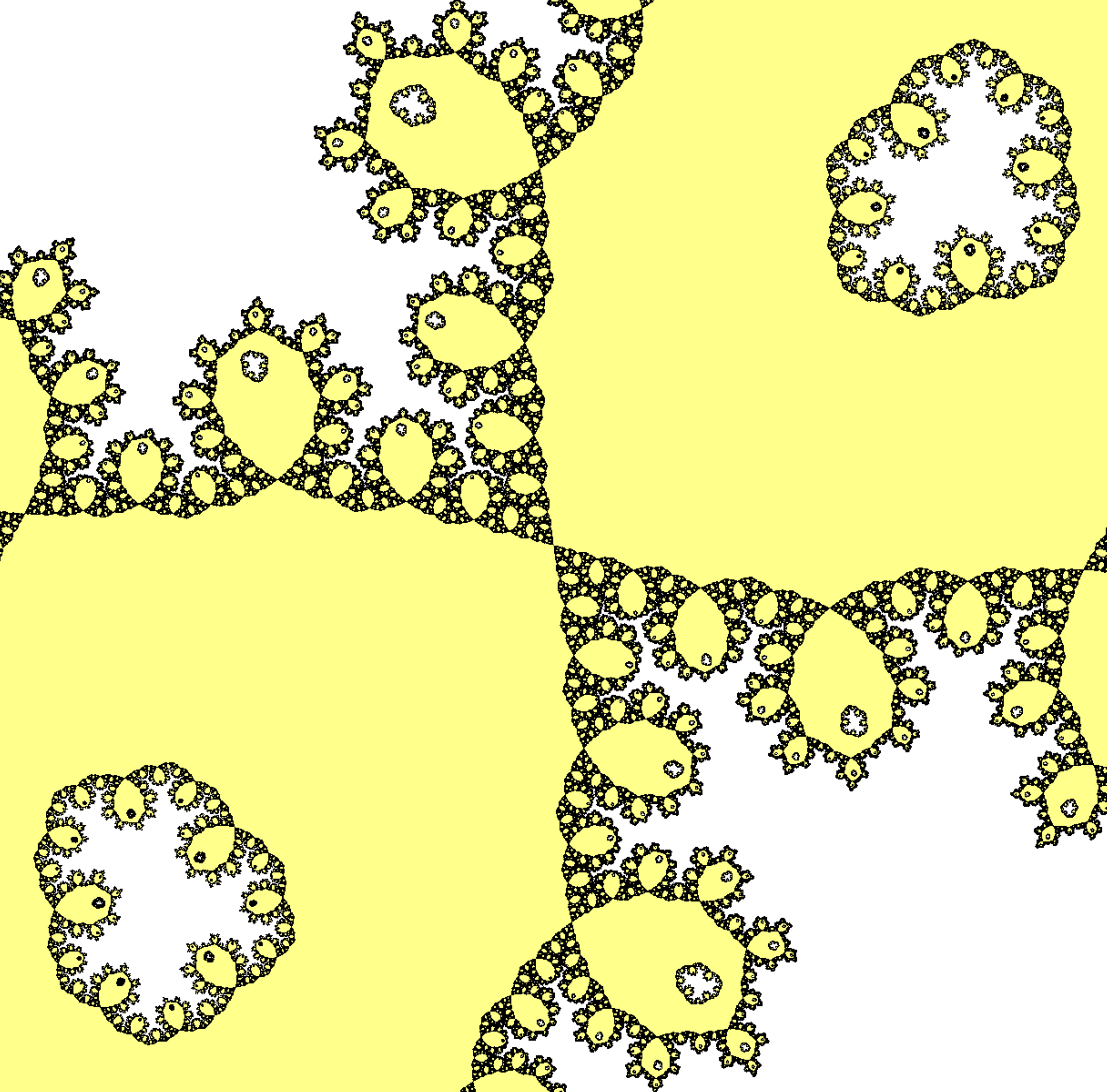}\\
    \includegraphics[width=0.37\textwidth]{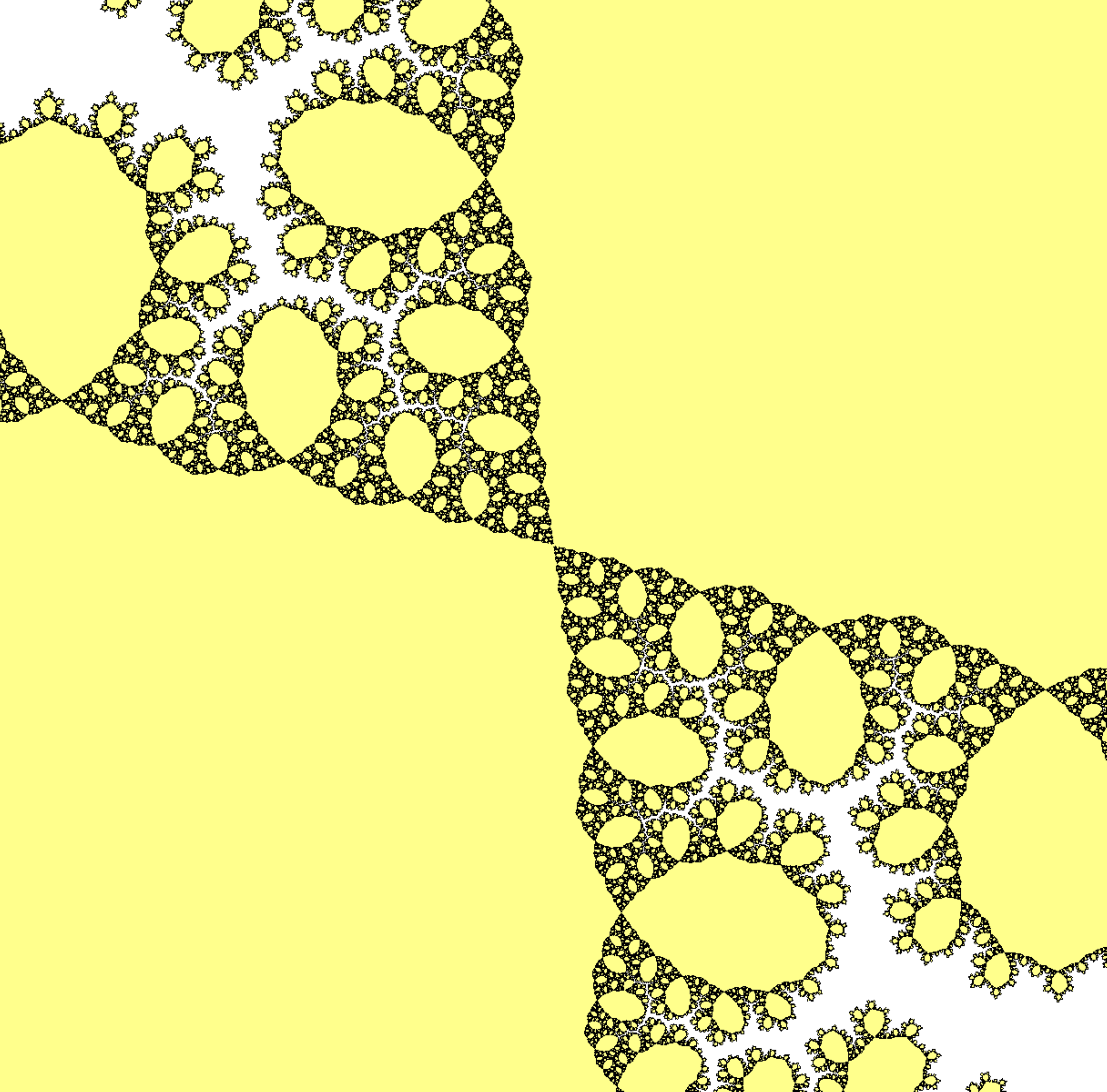} &\includegraphics[width=0.37\textwidth]{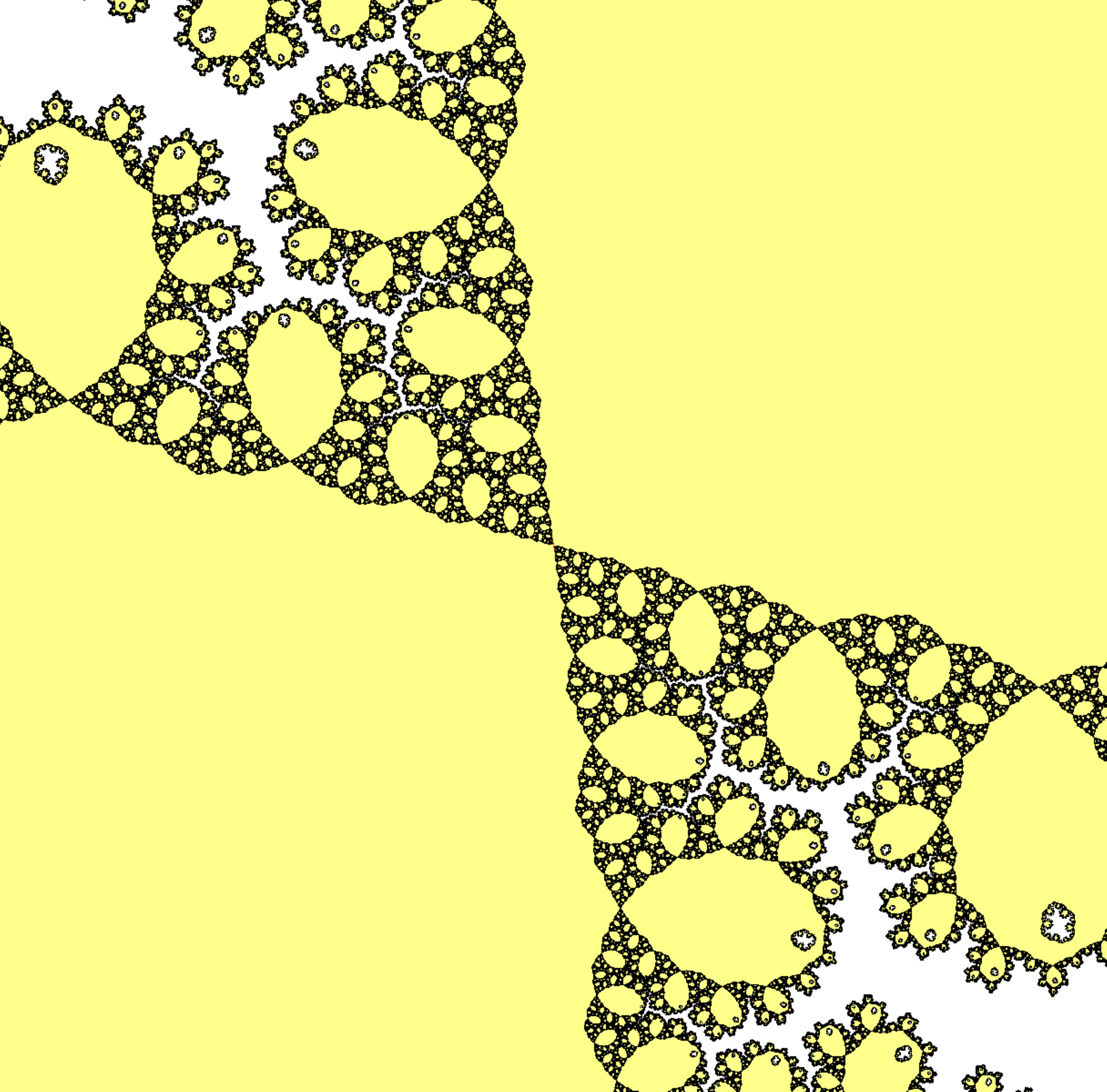}\\
    \includegraphics[width=0.37\textwidth]{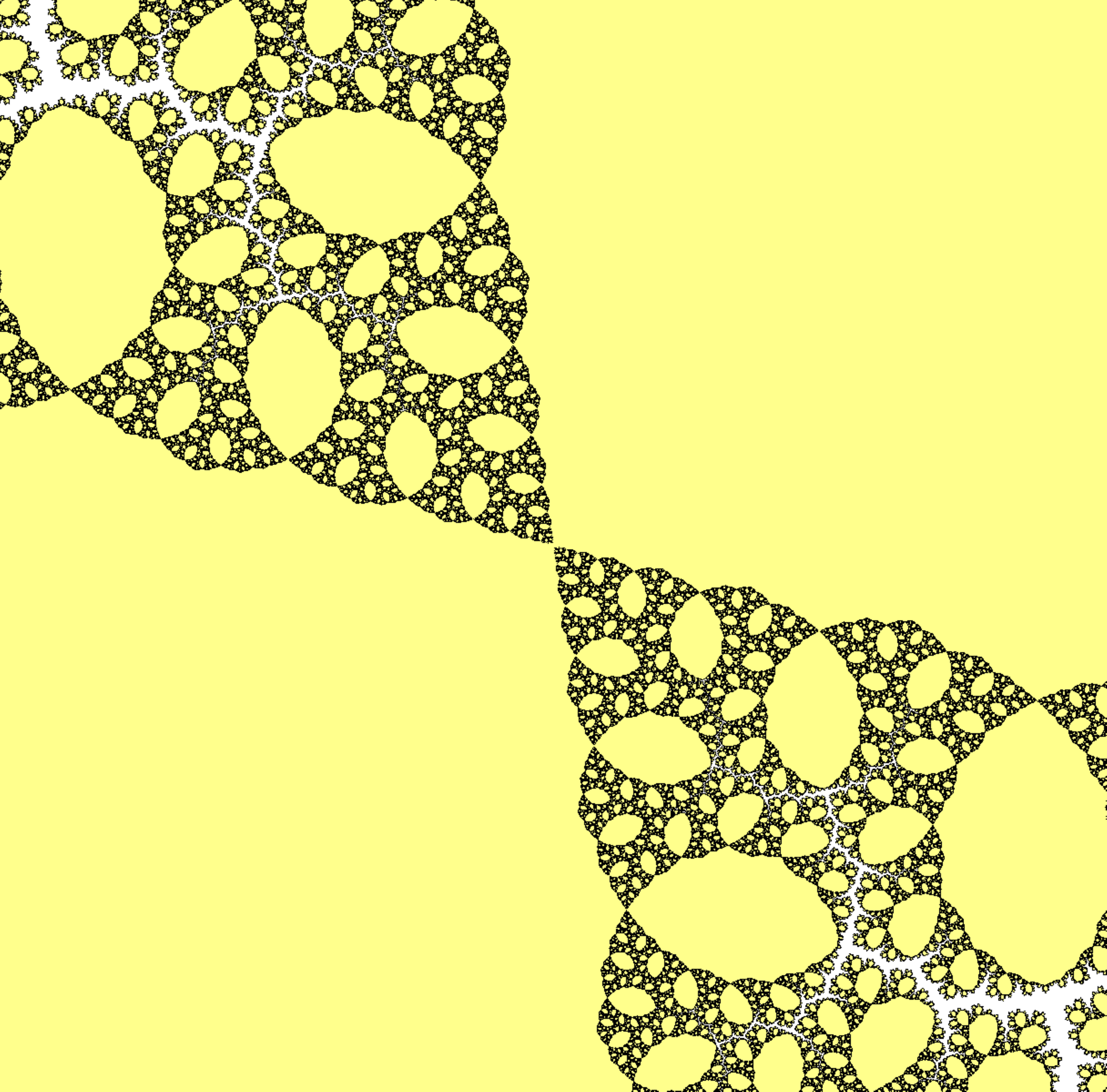} &\includegraphics[width=0.37\textwidth]{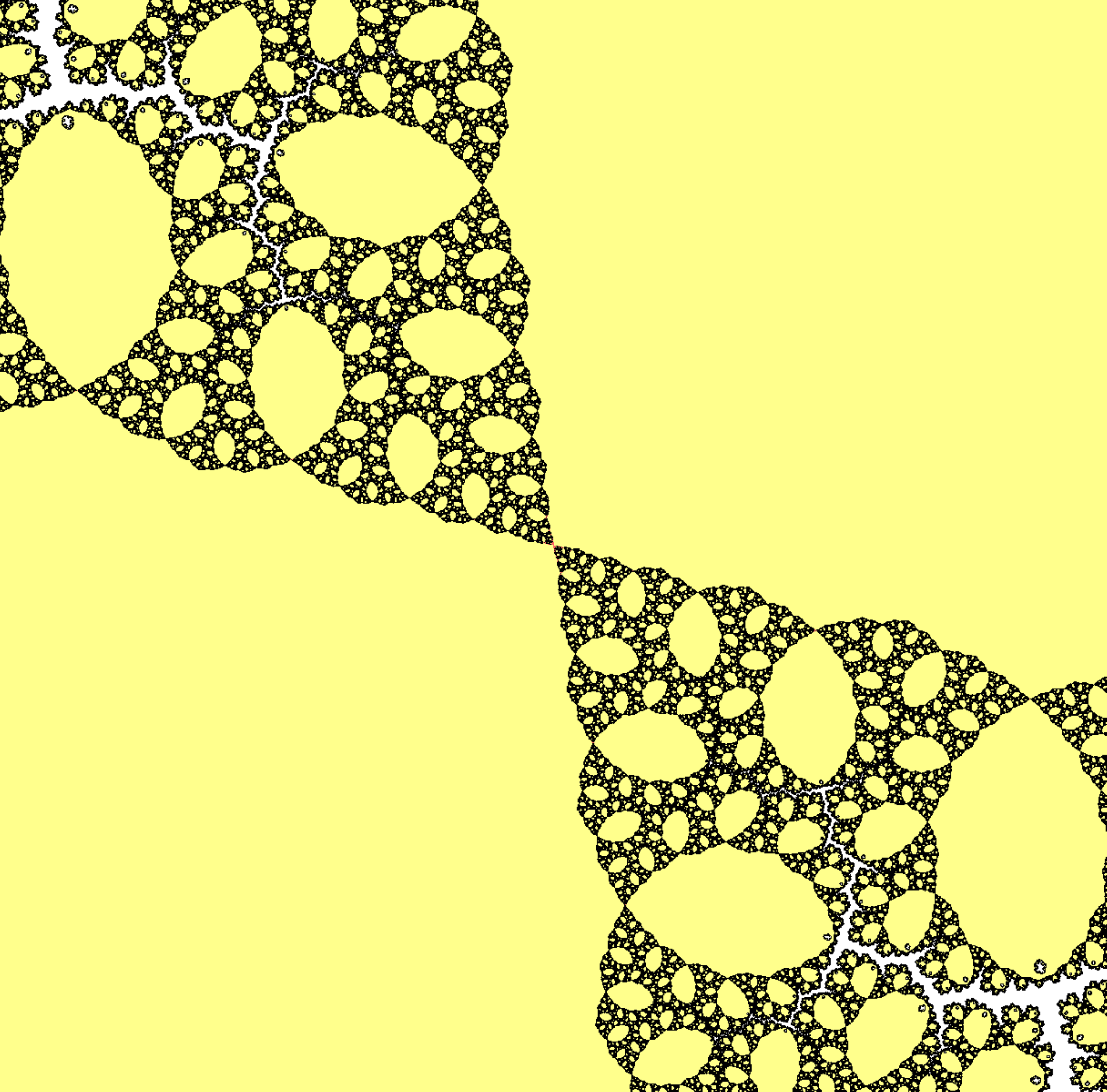}
    \end{tabular}
  \end{center}
  \caption{\small    \label{figB} On the left column  Julia sets of the quadratic polynomial $P$ as in (\ref{polyn}) with $\theta=\frac{\sqrt{5}-1}{2}$.
    To the right is illustrated the Julia set of $f$ as in (\ref{rat}), with $(a, b)$ chosen so that $\theta$ is as above.     Going down, we increase the zoom around $\omega$ and $\omega_2$ respectivley, and see that similarity becomes more and more pronounced.  } 
\end{figure}

Theorem B can be applied when searching for mappings in $f_{a, b}$ with Herman rings of bounded type rotation number.
Indeed, if $q_n$ denotes the denominator of the convergents to  $\theta$ defined in  (\ref{convergents}), from Theorem A we know

\begin{align*}
\frac{f^{q_{n+1}}(\omega_j)-\omega_j}{f^{q_n}(\omega_j)-\omega_j}
& =  \frac{\Phi\left(P^{q_{n+1}}(\omega)\right)-\Phi(\omega)}{\Phi\left(P^{q_n}(\omega)\right)-\Phi(\omega)}\\
& =   \frac{P^{q_{n+1}}(\omega)-\omega + \mathcal{O}\left(|P^{q_{n+1}}(\omega)-\omega|^{1+\alpha}\right)}
{P^{q_{n}}(\omega)-\omega + \mathcal{O}\left(|P^{q_{n}}(\omega)-\omega|^{1+\alpha}\right)}\\
& =   \frac{P^{q_{n+1}}(\omega)-\omega}{P^{q_{n}}(\omega)-\omega}
\left (1 + \mathcal{O}\left(|P^{q_{n}}(\omega)-\omega|^{1+\alpha}\right)\right)
\end{align*}
for $j = 1, 2$. 

Therefore the points $(a, b)$ where $f_{a, b}$ has a Herman ring with rotation number $\theta$ satisfy 
\[
  \frac{P^{q_{n+1}}(\omega)-\omega}{P^{q_n}(\omega)-\omega}
  \approx \frac{f_{a, b}^{q_{n+1}}(\omega_j)-\omega_j}
         {f_{a, b}^{q_n}(\omega_j)-\omega_j},
         ~~j=1,2,
\]
when $n$ is large, which narrows the search to a one-complex-dimensional set of parameters.

Let us finally note that we can use the dynamics to extend the results to any point $\omega'$ that under iteration goes to $\omega$, and any point $u$ that is eventually mapped to either $\omega_1$ or $\omega_2$.

\begin{corollary} \label{extended}
Suppose that $\omega'$ and $u$ satisfy $P^n(\omega')=\omega$ and $f^m(u) \in \left\{ \omega_1, \omega_2 \right\}$. Then,
\begin{enumerate}[\rm (a)]
    \item $\Phi$ is $C^{1+\alpha}$-conformal at $\omega'$.
    \item There exists a scaling factor $L' \in \C \setminus \{0\}$ such that
      \[
        L'(J(P)-\omega') \tsim{0} J(f) - u .
      \]
    \item If $\theta$ is a quadratic irrational, $H'$ is some connected component of $f^{-k}(H)$, $k\geq 0$, and $u \in \partial H'$, then $\partial H'$ is tightly self-similar around $u$.
    \item The preimage $P^{-n}(S)$ admits an Euclidean triangle with vertex at $\omega'$, if and only if $f^{-n}(H)$ admits Euclidean triangle with vertex at $u$.
\end{enumerate}
\end{corollary}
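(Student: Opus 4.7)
The plan is to transport the conclusions of Theorems A and B from the base points $\omega$ and $\omega_2$ to their grand-orbit preimages $\omega'$ and $u$ by means of local inverse branches of iterates of $P$ and $f$. The two workhorses will be the following stability principles, both immediate from Taylor expansion: composition with a local holomorphic diffeomorphism preserves $C^{1+\alpha}$-conformality (with nonzero derivative), and it also preserves tight similarity at a point, since the $O(|z-z_0|^2)$ holomorphic error is absorbed by the $O(|z-z_0|^{1+\beta})$ slack present in both definitions.

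For part (a) (assuming $\omega'\neq\omega$, otherwise Theorem A(d) applies directly), we choose $n$ minimal with $P^n(\omega')=\omega$, so that the orbit $\omega',P(\omega'),\dots,P^{n-1}(\omega')$ avoids the sole finite critical point $\omega$ of $P$; hence $(P^n)'(\omega')\neq 0$ and $P^n$ is biholomorphic near $\omega'$. Since $\omega\in\partial S\subset J(P)$ and $J(P)$ is completely invariant, the whole orbit lies in $J(P)$, hence outside $\GOD$ (which sits in the Fatou set of $P$ as the grand orbit of $D\Subset S$); thus $\Phi\circ P^n = f^n\circ\Phi$ holds on a neighborhood of $\omega'$. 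A parallel check shows that the image orbit under $f$ avoids both $\omega_2$ (by injectivity of $\Phi$, with $\Phi(\omega)=\omega_2$) and $\omega_1$ (because $\Phi^{-1}(\omega_1)\in\partial D\subset S$ lies in the Fatou set of $P$, while the orbit of $\omega'$ lies in $J(P)$), so $f^n$ admits a local holomorphic inverse $g$ with $g(\omega_2)=\Phi(\omega')$. We then read off the factorization $\Phi = g\circ\Phi\circ P^n$ near $\omega'$, whose three factors are holomorphic, $C^{1+\alpha}$-conformal at $\omega$ (Theorem A(d)), and holomorphic, respectively, all with nonzero derivatives; the first stability principle delivers (a).

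For part (b), linearizing $\Phi$ at $\omega'$ and using $\Phi(J(P))=\partial A_f(\infty)$ (Theorem A(b)) immediately gives $\Phi'(\omega')(J(P)-\omega') \tsim{0} \partial A_f(\infty) - \Phi(\omega')$. When $f^m(u)=\omega_2$ we take $u=\Phi(\omega')$ (so that $\omega'=\Phi^{-1}(u)\in P^{-m}(\omega)$); Theorem B(a) transported from $\omega_2$ to $u$ via the local holomorphic branch of $f^{-m}$ supplies the missing link, yielding $L'(J(P)-\omega')\tsim{0} J(f)-u$ with explicit $L' = L\,\Phi'(\omega')/(f^m)'(u)$. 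When $f^m(u)=\omega_1$ we invoke the analog of Theorems A and B with $\omega_1$ playing the role of $\omega_2$, obtained by performing Shishikura's surgery on the outer complementary component of $H$ rather than the inner one; this analog supplies a parallel quasiconformal conjugacy that is $C^{1+\alpha}$-conformal at $\omega_1$, and the same chain of transports applies. Parts (c) and (d) follow by the same template: tight self-similarity of $\partial^j H$ at $\omega_j$ (Theorem B(b) for $j=2$, the $\omega_1$-analog for $j=1$) pushes forward via the local branch of $f^{-m}$ to tight self-similarity of $\partial H'$ at $u$; and admitting a Euclidean triangle with vertex at a point is preserved by $C^{1+\alpha}$-conformal maps with nonzero derivative, so the biconditional of Theorem B(c) transports along $\Phi$ and the holomorphic inverse branches of $P^n$ and $f^m$.

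The main technical step beyond these routine transports is establishing the $\omega_1$-analog of Theorem A needed in the second cases of (b)--(d). This is not stated explicitly in the main text, but should follow from a symmetric copy of the surgery of Section~\ref{surg} carried out on the outer complementary component of $H$ (after an appropriate M\"obius normalization interchanging the roles of $0$ and $\infty$), producing a quadratic polynomial with Siegel disk of rotation number $\theta$ whose critical point corresponds to $\omega_1$, together with its own $C^{1+\alpha}$-conformal quasiconformal conjugacy. Once that is in hand, the rest of the argument is formal.
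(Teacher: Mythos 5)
Your overall strategy---transporting the conclusions of Theorems A and B along local inverse branches of $P^n$ and $f^m$ using the two stability principles you state---matches the paper's, and parts (a), (c), (d) go through as you outline. Two points are worth flagging.

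The argument you give for part (b) covers only the special case $u=\Phi(\omega')$ (forcing $m=n$), but the statement allows $\omega'$ and $u$ to be chosen independently, with different iteration times $n$ and $m$ and with $u$ not necessarily a $\Phi$-image of a preimage of $\omega$. The fix, which is what the paper does, is to keep the two sides apart: a branch of $P^{-n}$ together with Proposition~\ref{preservesim} gives $c_1(J(P)-\omega')\tsim{0} J(P)-\omega$ for some $c_1\neq 0$, a branch of $f^{-m}$ gives $J(f)-\omega_2\tsim{0} c_2(J(f)-u)$, and chaining these with Theorem~B(a) via the transitivity of tight similarity produces $L'(J(P)-\omega')\tsim{0} J(f)-u$ for arbitrary $\omega'$ and $u$. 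Your route through $\Phi$ is shorter but only reaches those pairs that correspond under $\Phi$.

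For the $\omega_1$-case you propose re-running Shishikura's surgery on the outer complementary component of $H$. That would work, but the paper notices a cleaner reduction that makes the tentativeness of your last paragraph unnecessary: conjugating by $z\mapsto 1/z$ sends $f_{a,b}$ to $1/f_{a,b}(1/z)=f_{a,b^{-1}}$, which is again a member of the model family, and this change of coordinates interchanges $\omega_1\leftrightarrow\omega_2$ and the two boundary components of $H$. Theorems A and B applied verbatim to $f_{a,b^{-1}}$ therefore already supply the $\omega_1$-statements for $f$, with no new surgery to set up or verify.
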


\begin{remark} \label{generalize}
McMullen's results extend to a much more general class than quadratic polynomials, namely to all quadratic-like maps with a fixed point of derivative $e^{2\pi i \theta}$, being $\theta$ and irrational of bounded type (see \cite[Theorem 5.1]{mcm}). Using this result, one can see  (not without some work) that Theorems A and B also extend to a more general setting than the model family considered above. More precisely, if $g$ is a rational like map that straightens to a member of the model family $f_{a,b}$ having a Herman ring of rotation number $\theta$ (an irrational of bounded type) then our results apply to the small Julia set of $g$. 
\end{remark}

\section{Preliminaries about $P$ and its Siegel disk} \label{prelimsSD}

Before proving our two main theorems, we review some facts about the Julia set $J(P)$ and the Siegel disk $S$.

Recall that $P(z) = e^{2i\pi\theta}z + z^2$, with $\theta$ of bounded type, and $\omega=-\frac{e^{2\pi i \theta}}{2}$, the critical point. In \cite{MR1440932}, Petersen showed the following theorem.

\begin{theorem} \label{Petersen}
The Julia set $J(P)$ is a locally connected set of zero Lebesgue measure. 
\end{theorem}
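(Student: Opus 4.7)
The plan is to follow Petersen's strategy in \cite{MR1440932}, organized around a Blaschke model, a Yoccoz-style puzzle and area estimates.

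First, I would build a combinatorial model for $P$. Since $\theta$ is of bounded type, one can set up a degree-$3$ Blaschke product $g$ that fixes $0$ and $\infty$, has a critical point on the unit circle, and whose restriction to $S^1$ is a real-analytic circle diffeomorphism of rotation number $\theta$. By the Herman--\'Swiatek theorem the bounded-type hypothesis implies that $g|_{S^1}$ is quasisymmetrically (in fact smoothly) conjugate to the rigid rotation $R_\theta$. Performing the standard surgery (in the opposite direction to that of Section \ref{surg}) by replacing the dynamics inside the unit disk by a rotation and straightening the resulting quasiregular map, one obtains a quasiconformal conjugacy from a modification of $g$ to $P$. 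In particular $\partial S$ is a quasicircle and the dynamics of $P$ on $\partial S$ is quasisymmetrically conjugate to $R_\theta$.

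Next I would construct a Yoccoz-type puzzle. Choose a finite system of external rays of $A_P(\infty)$ landing at suitable preperiodic points together with an equipotential, and adjoin combinatorially defined crosscuts of $S$ produced by the Blaschke model and a chosen equipotential inside the Siegel disk. Taking preimages under iteration yields a nested sequence of puzzle pieces $\{P_n(z)\}$ around each $z \in J(P)$. Local connectivity is equivalent to the geometric statement
\[
  \operatorname{diam}(P_n(z)) \longrightarrow 0 \qquad \text{as } n\to\infty.
\]
For pieces staying away from $\partial S$ this follows from standard hyperbolic contraction and Yoccoz modulus estimates in an annulus. For pieces accumulating on $\partial S$ one uses the quasisymmetric conjugacy on $\partial S$ provided by bounded type, together with a renormalization argument comparing successive return maps to the critical point on scales governed by the convergents $p_n/q_n$ of $\theta$. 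Combining the two regimes gives the shrinking statement and hence local connectivity.

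Finally, for $|J(P)| = 0$, I would use the puzzle again. Every Fatou component is either preperiodic to $A_P(\infty)$ or to $S$, so $J(P) = \partial A_P(\infty) \cap \partial\bigcup_n P^{-n}(S)$. Covering $J(P)$ by depth-$n$ puzzle pieces and applying Koebe-type area distortion (valid thanks to the uniform quasisymmetric control near $\partial S$) one shows that the total area of pieces of depth $n$ meeting $J(P)$ tends to $0$; the bounded-type condition, via the equidistribution of the critical orbit on $\partial S$, guarantees that no definite density can concentrate on $\partial S$.

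The main obstacle is the geometric shrinking of puzzle pieces near $\partial S$: the dynamics there is an irrational rotation with no expansion, so one cannot rely on hyperbolic contraction. The bounded-type hypothesis has to be used in an essential way through a renormalization/scaling argument that transports shrinking at the critical point to shrinking along the whole boundary of the Siegel disk.
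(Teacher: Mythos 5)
The paper does not prove this statement; it simply records it as Petersen's theorem, with the reference \cite{MR1440932}. What you have written is an attempt to reconstruct Petersen's actual argument, and the skeleton you describe (Blaschke model plus Ghys--Douady surgery, a Yoccoz-type puzzle with two regimes, and puzzle-based area estimates) is indeed the right one.

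There is, however, a substantive error in your first step. A degree-$3$ Blaschke-type product $g$ preserving the unit circle with a critical point \emph{on} the circle restricts to a \emph{homeomorphism with a critical point} of $S^1$, not to a diffeomorphism; the derivative vanishes at the critical point. More importantly, such a critical circle map \emph{cannot} be smoothly conjugate to the rotation $R_\theta$: any smooth conjugacy would carry the critical point of $g|_{S^1}$ to a critical point of $R_\theta$, which has none. The Herman--\'Swiatek theorem under the bounded-type hypothesis yields only a \emph{quasisymmetric} conjugacy, and this is all one gets. Your parenthetical ``(in fact smoothly)'' must be deleted; this is not a stylistic slip, because the failure of smoothness at the critical point is precisely what makes the geometry near $\partial S$ nontrivial and is the source of the phenomena studied in the rest of the paper (McMullen's $C^{1+\alpha}$-conformality, deep points, self-similarity). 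Beyond that, the measure-zero half of your sketch is too vague to be assessed as a proof: Petersen's argument there is considerably more delicate than ``Koebe-type area distortion,'' and relies on careful combinatorial control of puzzle pieces accumulating on $\partial S$. Still, as a road map of the cited proof, the overall strategy is correct once the conjugacy claim is corrected to quasisymmetric.
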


McMullen proved a string of geometric results in \cite{mcm}.
He showed that $J(P)$ has Hausdorff dimension strictly less than two.
He also showed that $\omega$ is a density point of the filled Julia set of $P$. In fact,
he showed an even stronger result, namely that $\omega$ is a measurable deep point in a subset of the {\em filled in Julia set} $K(P):=\C\setminus A_P(\infty)$.

\begin{definition}[(Measurable) deep point]
Let $z_0\in \C$ and $E$ be a Borel set. For $r>0$, let $s(r)$ be the largest radius so that $B(z,s(r)) \subset B(z_0,r)\setminus E$, for some $z\in \C$. We say that $z_0$ is a {\em deep point} in $E$ if there exists $\alpha>0$ such that 
\[
s(r)\leq r^{1+\alpha}, \text{\ \ for all $r$ small enough.}
\]
We call $z_0$ a \emph{measurable deep point} in $E$, if there exists constants $M, \beta, \delta>0$ such that
\[
  \Ar(B(z_0, r) \setminus E) \leq M \, r^{2+\beta}, \text{ whenever  } r<\delta .
\]
\end{definition}

It is obvious that measurable deep implies deep. Observe also that if $E\subset E'$ are Borel sets and $z_0$ is a measurable deep point in E then $z_0$ is a measurable deep point in $E'$.

\begin{theorem}[{\cite[Cor.~4.5]{mcm}}]\label{deep}
  Let $\epsilon > 0$ be arbitrary
  and define
  \[
    S_\epsilon 
    = \{ z \in K(P) : d(P_c^n(z), S) < \epsilon \text{ for all }n \geq 0  \} .
  \]
  Then $\omega $ is a measurable deep point in $S_\epsilon$. 
\end{theorem}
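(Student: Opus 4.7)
The plan is to exploit the renormalization self-similarity of $P$ near the critical point $\omega$, which is available precisely because $\theta$ is of bounded type. The scales $r_n := |P^{q_n}(\omega) - \omega|$ decay geometrically, and around $\omega$ the first-return dynamics admits uniform a priori bounds across these scales, by work of Petersen. This geometric regularity is what ultimately converts the simple measure-zero statement $\Ar(J(P)) = 0$ into the quantitative area decay required for measurable depth.

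My approach has two stages. First, decompose
\[
B(\omega, r) \setminus S_\epsilon \;=\; \bigl(B(\omega, r) \setminus K(P)\bigr) \;\cup\; \bigl(B(\omega, r) \cap (K(P) \setminus S_\epsilon)\bigr),
\]
and handle each piece separately. For the first piece, I would invoke the earlier and in fact deeper result in the same McMullen paper that $\omega$ is already a measurable deep point in the filled Julia set $K(P)$; this yields $\Ar(B(\omega, r) \setminus K(P)) \leq M r^{2+\beta}$ directly. For the second piece, observe that a point $z \in K(P)$ fails to lie in $S_\epsilon$ precisely when some forward iterate $P^n(z)$ enters the compact set $K_\epsilon := \{ w \in K(P) : d(w, S) \geq \epsilon \}$.

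The second piece is controlled by pulling back along the dynamics. Near $\omega$, iterates of $P$ expand by definite factors comparable to $r_n / r_{n+1}$ under suitable renormalization, and the preimages of $K_\epsilon$ sitting inside $B(\omega, r_n)$ can be organized scale by scale. Each individual preimage contributes an area that decays geometrically in the renormalization scale, and summing the resulting telescoping series against the area of $B(\omega, r)$ produces a polynomial bound of the form $C r^{2+\beta'}$. Combining the two pieces gives a single estimate $\Ar(B(\omega, r) \setminus S_\epsilon) \leq M' r^{2+\min(\beta, \beta')}$, which is the desired measurable depth.

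The main obstacle, and the heart of the matter, is establishing the quantitative comparability of the dynamics between consecutive renormalization scales $r_n$ and $r_{n+1}$. Concretely, one must show that the renormalizations of $P$ centered at $\omega$ form a precompact family of quadratic-like maps with Siegel disks whose rotation numbers are the appropriate tails of $\theta$, and that the complement of the analogous $S_\epsilon$-set inside the domains of these limit renormalizations occupies only a small fraction of the area. This is exactly where the bounded type hypothesis is essential and where Petersen's a priori bounds provide the uniform geometric control needed to close the argument; without them, the comparison between scales collapses and no polynomial decay can be extracted.
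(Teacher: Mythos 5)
Note first that the paper does not prove this statement at all: it is quoted verbatim from McMullen's Corollary~4.5 in \cite{mcm} and used as a black box, so there is no in-paper proof to compare your attempt against. Judged on its own, your proposal names the right ingredients driving McMullen's argument — renormalization of $P$ at $\omega$, the bounded type hypothesis, the a priori bounds, and a scale-by-scale area estimate. The decomposition of $B(\omega,r)\setminus S_\epsilon$ into an escaping part $B(\omega,r)\setminus K(P)$ and a bounded-but-wandering part $B(\omega,r)\cap\bigl(K(P)\setminus S_\epsilon\bigr)$ is a reasonable organizing device, and invoking the companion theorem of \cite{mcm}, that $\omega$ is already measurable deep in $K(P)$, to dispose of the first part is logically legitimate (that theorem does precede Corollary~4.5 in McMullen's development, though calling it ``deeper'' is backwards: since $S_\epsilon\subset K(P)$, measurable depth in $S_\epsilon$ is the stronger conclusion).

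The genuine gap, which you flag yourself, is that the estimate on the second part is asserted rather than derived. The claim that ``each individual preimage contributes an area that decays geometrically in the renormalization scale'' is precisely the content that must be proved, and precompactness of the renormalizations plus a priori bounds does not by itself deliver it. What is actually needed is a uniform relative-area contraction: at each renormalization scale $r_n$, the subset of $K(P)\cap B(\omega,r_n)$ whose orbit under the first-return dynamics strays $\epsilon$-far from $S$ must occupy at most a definite fraction $\lambda<1$ of the area at that scale, so the fraction shrinks geometrically as scales nest and the telescoping sum gives the polynomial bound. Nothing in your proposal establishes that definite fraction; without it the series you invoke has no reason to sum to a power of $r$. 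A smaller quibble: the a priori bounds for bounded-type critical circle maps are due to Herman and \'Swiatek; Petersen's theorem (local connectivity and zero measure of $J(P)$) relies on them but is a different result. As written the proposal is an outline of the right strategy, but the central quantitative step is missing.
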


If $\phi : \RS \to \RS$ is quasiconformal and the support of $\mu_\phi$ gets thin close to a point $z_0$, we can expect $\phi$ to be regular at $z_0$.
There are several results in this direction (see e.g. \cite[Chapter 6]{lehvir}), and we will find use for the following theorem of McMullen.

\begin{theorem}[{\cite[Theorem 2.19]{mcm2}}]\label{reg}
  Suppose $\phi : U \to V$ is quasiconformal and let $\Omega = U \setminus \supp \mu_\phi$.
  If $z_0$ is a measurable deep point of $\Omega$, then $\phi$ is $C^{1+\alpha}$ at $z_0$  for some $\alpha>0$, and $\phi'(z_0) \neq 0$ .
\end{theorem}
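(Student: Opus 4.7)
The plan is to exploit the measurable deep point hypothesis through a rescaling argument at $z_0$. After a translation we may assume $z_0=0$ and $\phi(0)=0$; let $\mu=\overline{\partial}\phi/\partial\phi$, so $\|\mu\|_\infty=k<1$ and $\mu$ vanishes on $\Omega$. The hypothesis provides $M,\beta,\delta>0$ with
\[
\Ar\!\bigl(B(0,r)\cap\supp\mu\bigr)\le M\,r^{2+\beta},\qquad r<\delta.
\]
For small $r>0$ introduce the rescaling $\phi_r(z):=\phi(rz)/r$; by the chain rule its Beltrami coefficient satisfies $\mu_r(z)=\mu(rz)\cdot\overline{r}/r$, so it is supported inside $r^{-1}\supp\mu$ and has support of area at most $Mr^{\beta}$ on the unit disk $\Delta$, which tends to $0$ as $r\to 0$.

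By compactness of $K$-quasiconformal maps under a fixed normalization, the family $\{\phi_r\}$ is normal. Any subsequential limit $\psi$ solves the Beltrami equation with dilatation supported on a null set, hence is holomorphic; being a limit of homeomorphisms normalized at $0$ it is affine, $\psi(z)=Az$ with $A\ne 0$, and this $A$ is the natural candidate for $\phi'(0)$. To upgrade this qualitative statement into a quantitative $C^{1+\alpha}$ estimate, one uses the integral representation coming from the measurable Riemann mapping theorem: the deviation of $\phi$ from its linearization is encoded by the Neumann series for the Cauchy–Beurling transform $S$ applied to $\mu$. Since $S$ is bounded on $L^p$ for some $p=p(k)>2$ (Astala's sharp range), Hölder's inequality combined with the area bound yields
\[
\|\mu\,\chi_{B(0,r)}\|_p\le C\,r^{(2+\beta)/p}.
\]
Inserting this into the integral formula and tracking the behavior under $z\mapsto rz$ produces $|\phi(z)-Az|\le C'|z|^{1+\alpha}$ for some $\alpha=\alpha(\beta,k)>0$, which is exactly $C^{1+\alpha}$-conformality. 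The nonvanishing $A\ne 0$ is automatic because $\phi$ is an orientation-preserving homeomorphism, so the normalized rescalings cannot collapse to a constant.

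The main obstacle is the quantitative step: the Beurling transform is bounded on $L^p$ only for a restricted range of $p$ depending on $k$, and the exponent $\alpha$ one extracts must be shown to remain strictly positive after accounting for the degradation under iteration of the Neumann series. This delicate bookkeeping — rather than the soft normal-families part — is the heart of the argument in \cite{mcm2}, and it relies on Astala-type distortion estimates combined with the precise polynomial area decay supplied by the measurable deep point hypothesis.
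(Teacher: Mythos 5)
The paper does not reprove McMullen's theorem: it cites \cite[Theorem 2.19]{mcm2} for the $C^{1+\alpha}$ conclusion and only supplies, in the remark that follows, an argument for $\phi'(z_0)\neq 0$. That argument invokes the Teichm\"uller--Wittich--Belinskii criterion from \cite[Lemma 6.1]{lehvir}: if $\iint_{|z|<r}\frac{D(z)-1}{|z|^2}\,d\sigma<\infty$ (with $D$ the pointwise dilatation), then $\phi$ is complex differentiable at $0$ with nonzero derivative; the measurable-deep-point bound $\Ar(B(0,r)\cap\supp\mu)\le Mr^{2+\beta}$ makes the integrand summable over dyadic annuli $\{2^{-(n+1)}<|z|<2^{-n}\}$, each contributing $O(2^{-\beta n})$. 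Your proposal takes a genuinely different route (rescaling plus normal families, then a Neumann-series/Beurling-transform estimate), which is closer in spirit to how one might try to prove the full $C^{1+\alpha}$ statement from scratch, but it has a gap precisely where the paper is careful.

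The gap is in the claim that $A\neq 0$ is ``automatic because $\phi$ is an orientation-preserving homeomorphism.'' The rescalings $\phi_r(z)=\phi(rz)/r$ are normalized only at the single point $0$; one point of normalization does not give compactness for a family of $K$-quasiconformal maps, and nothing a priori prevents $\phi_r(1)=\phi(r)/r$ from tending to $0$ or to $\infty$ as $r\to 0$. If that happens the subsequential ``limit'' collapses or escapes, and you never produce a nontrivial affine $\psi$. Orientation-preservation controls the sign of the Jacobian a.e., not pointwise derivatives, so it cannot rule out this degeneracy. The content of the nonvanishing-derivative conclusion is exactly that $\phi(r)/r$ stays in a compact subset of $\C^*$, and that requires the quantitative input from the deep-point hypothesis; this is what the Lehto--Virtanen integral criterion delivers and what your normal-families argument silently assumes. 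A secondary issue: the Neumann-series representation $(I-\mu S)^{-1}$ applies to a Beltrami coefficient defined on all of $\C$ and producing a normalized solution of the global Beltrami equation, whereas here $\phi:U\to V$ is only local; you would need to extend $\mu$ by $0$ outside $U$, compare $\phi$ with the resulting global solution, and account for the resulting boundary terms before the $L^p(\,p>2)$ estimates on $S$ can be used to close the $|z|^{1+\alpha}$ bound.
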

\begin{remark}
The last conclusion, i.e. the derivative being nonzero, is implicitely used in McMullen's text although not explicitely stated. For completeness, let us show how it follows from the condition of measurable depth. In \cite[Lemma 6.1]{lehvir} it is proven that if the dilatation of $\phi$, say $D(z)$,  satisfies that the integral
\[
\int\int_{|z|<r} \frac{D(z)-1}{|z|^2} d\sigma
\]
is convergent for every $r$, then $\phi$ is complex differentiable at $0$ with nonzero derivative. By breaking the disk of radius $r$ into a series of annuli $A_n=\{\frac{1}{2^{n+1}} < |z| < \frac{1}{2^n}\}$, one can bound the integral on each annulus by $\frac{4 (K-1) M}{2^{\beta n}}$, where $M$ and $\beta$ are the constants given by the fact  that $0$ is a measurable deep point of $\Omega$, and $K$ is the bound on the dilatation of $\mu$. The integral is thus bounded from above by a geometric series and hence finite.
\end{remark}

McMullen also showed that when $\theta$ is a quadratic irrational, the Siegel disk is self-similar at the critical point $\omega$. More precisely he showed the following.

\begin{theorem}[{\cite[Theorem7.1]{mcm}}] \label{selfsim}
Suppose $\theta$ is a quadratic irrational, and let $s$ denote the periodicity of the coefficients of its continued fraction. Let $P$ be as in (\ref{polyn}) and $S$ be its Siegel disk. 
Then, there exist $\alpha>0$ and a locally defined homeomorphism $\psi$, conjugating 
$P^{q_n}$ to $P^{q_{n+s}}$ on $\partial S$ for $n$ sufficiently large.
More precisely, we have
\[
  \psi(z) = \begin{cases}
    \omega + \kappa (z-\omega) + O( (z-\omega)^{1+\alpha} ) &  \text{ if $s$ even} \\
    \omega + \kappa (\overline{z-\omega}) + O( (z-\omega)^{1+\alpha} ) & \text{ if $s$ odd}
  \end{cases}
\]
for some complex number $\kappa$ with $ 0 < |\kappa| < 1$. 
\end{theorem}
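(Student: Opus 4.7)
The plan is to build $\psi$ from a renormalization theory for irrational rotations of bounded type, and then upgrade a topological (or quasiconformal) conjugacy to a $C^{1+\alpha}$-linear map at $\omega$ using regularity results of the same flavor as Theorem \ref{reg}. The starting point is that quadratic irrationals are precisely the fixed points of period $s$ for the Gauss map $G(\theta)=\{1/\theta\}$, so the Yoccoz/Inou--Shishikura renormalization operator $\mathcal{R}$, which on the level of rotation numbers acts as $G$, sends the germ $(P,\omega)$ after $s$ iterations to another quadratic-like germ with the same multiplier $e^{2\pi i \theta}$ and the same combinatorics.

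Concretely, first I would pass to the quadratic-like restriction of $P$ near the Siegel disk and introduce the renormalization $\mathcal{R}P$ as a first-return map to a fundamental sector at $\omega$, straightened to a new quadratic-like map and rescaled so that the indifferent fixed point lands at a normalized position. The $n$-th iterate $\mathcal{R}^n P$ captures, up to rescaling by some $\lambda_n\in\mathbb{C}^*$, the dynamics of the high iterate $P^{q_n}$ in a shrinking neighborhood of $\omega$: on the nest of renormalization domains, $\mathcal{R}^n P$ is conjugate to $P^{q_n}$ by an affine change of coordinates $z\mapsto \omega+\lambda_n^{-1}z$ modulo uniformly controlled quasiconformal distortion (this is where Petersen-type a priori complex bounds for bounded-type rotation numbers are used). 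Because the continued fraction of $\theta$ has period $s$, the maps $\mathcal{R}^n P$ and $\mathcal{R}^{n+s}P$ represent the same germ for $n$ large enough (combinatorial convergence plus hyperbolicity of the Siegel renormalization operator on the space of germs with fixed rotation number), which yields a quasiconformal conjugacy between them. Pulling this back through the two renormalization changes of coordinates produces a map $\psi$ between neighborhoods of $\omega$ that conjugates $P^{q_n}$ to $P^{q_{n+s}}$ on (a neighborhood of) $\partial S$, with $\psi'(0)=\kappa:=\lambda_n/\lambda_{n+s}$; the twist by complex conjugation in the odd-$s$ case comes from the fact that $\mathcal{R}$ reverses orientation at the combinatorial level when viewed as a map on germs with marked fixed point, so an odd number of renormalizations introduces one conjugation.

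Finally, to obtain the $C^{1+\alpha}$ error term I would invoke regularity for quasiconformal maps whose Beltrami coefficient is supported on a thin set, in the spirit of Theorem \ref{reg}. The conjugacy $\psi$ produced by the renormalization argument is a priori only quasiconformal; but by construction its dilatation is supported away from the postcritical set, and in particular away from a neighborhood in which $\omega$ is a measurable deep point of the complement of $\operatorname{supp}\mu_\psi$ (using exactly the measurable deepness of $\omega$ in $K(P)$ furnished by Theorem \ref{deep}, transported through the renormalization coordinates, whose uniform geometry follows from the bounded-type hypothesis). Applying the $C^{1+\alpha}$-regularity criterion at $\omega$ then gives $\psi(z)=\omega+\kappa(z-\omega)+O(|z-\omega|^{1+\alpha})$, with $\kappa\neq 0$ since the derivative is nonzero by the remark following Theorem \ref{reg}. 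The even/odd dichotomy is inherited from the construction.

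The main obstacle is the renormalization half: establishing that $\mathcal{R}^{n+s}P$ and $\mathcal{R}^n P$ are genuinely conjugate (not merely combinatorially equivalent) for large $n$, with a conjugacy whose dilatation support is controlled enough to apply the measurable-depth regularity theorem. This is where one needs a priori complex bounds for bounded-type Siegel renormalization and, essentially, the hyperbolicity of $\mathcal{R}$ on the slice of fixed rotation number --- the substantial analytic content hidden inside the quoted statement.
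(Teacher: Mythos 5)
This statement is cited directly from McMullen's paper (\cite[Theorem 7.1]{mcm}); the present paper does not prove it, so there is no internal proof to compare against. Your sketch is a reasonable reconstruction of McMullen's actual strategy: build a quasiconformal conjugacy between renormalization levels and then invoke the measurable-deep-point criterion (Theorem \ref{reg}) to upgrade it to $C^{1+\alpha}$ at $\omega$, with the orientation-reversal for odd $s$ coming from the Gauss map. So the high-level skeleton is right.

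Two points of attribution are off, and one substantive gap is worth naming. First, McMullen does not use the Yoccoz/Inou--Shishikura near-parabolic renormalization; in 1998 he works through the Herman--\'Swi\k{a}tek Blaschke-product model and renormalization of critical circle maps (commuting pairs / cylinder renormalization), for which the a priori complex bounds for bounded type are available. Second, McMullen does not invoke hyperbolicity of the renormalization operator --- that result on the Siegel slice was not available to him; instead he uses his own rigidity/inflexibility machinery (uniform twisting, geometric limits) to get geometric convergence of the renormalization tower for a quadratic-irrational rotation number. Third, and this is the real content you flag but do not resolve: obtaining a quasiconformal conjugacy between $P^{q_n}$ and $P^{q_{n+s}}$ whose dilatation support is thin near $\omega$ (so that Theorem \ref{deep} and Theorem \ref{reg} apply) is not a formal consequence of combinatorial periodicity --- one has to actually construct the conjugacy via the tower and control where its Beltrami coefficient lives. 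McMullen's argument for this occupies the bulk of his Section 7, and a blind proof cannot merely gesture at it. As the theorem is cited here, this is not a defect of the paper; but be aware that the ``smooth'' part of your plan (deep-point regularity) is the short part, while the ``renormalization'' part you defer is where essentially all of the work lies.
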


It follows from the theorem, that when $s$ is even, $\psi$ is $C^{1+\alpha}$-conformal at $\omega$, and when $s$ is odd, $\psi$ is $C^{1+\alpha}$-anticonformal. 

\begin{remark}[Tight self-similarity of $S$] \label{tightselfsim}
We will see later (see Remark \ref{tightselfsim2})  that this implies that the Siegel disk is actually tightly self-similar. More precisely, if $s$ is even, then
\[
  \partial S - \omega \tsim{0} \kappa (\partial S - \omega), 
\]
and, when $s$ is odd, 
\[
  \partial S - \omega \tsim{0} \kappa (\overline{\partial S - \omega}),
\]
where $\kappa$ is the scaling factor in Theorem \ref{selfsim}.
\end{remark}

With the preceeding theorem in hand, Buff and Henriksen \cite{MR1713131} were able to prove that for some values of $\theta$, such as the golden mean $\frac{\sqrt{5}-1}{2}$, the Siegel disk $S$ contains an Euclidean triangle with a vertex at the critical point.

\section{Quasiconformal surgery and Proof of Theorem A} \label{surg}

In this section we prove Theorem A.
We shall see that the proofs of (a), (b) and (c). follow directly from a surgery construction due to Shishikura, whereas the last part can be derived by bounding the relative area of the support of the quasiconformal distorsion of $\Phi$ as we approach $\omega$. The main idea of the surgery is simply to replace the dynamics in the hole of the Herman ring with an irrational rotation. In this way we obtain a quasiregular map $F$, which is quasiconformally conjugate to $P$. Letting $\Phi$ denote the conjugacy from $P$ to $F$, we then check that it has the stated properties. Details are as follows (c.f. \cite{shi} and \cite[Section 7.2]{surbook}).

We keep the notation from the setup in Section \ref{setup}.  Let $\phi_0 : H \to \{ z: r < |z| < 1 \} $ denote the linearizing map, conjugating $f$ to $R_\theta : z \mapsto e^{2i\pi\theta}z$.
Define three topological disks $U_1 \Subset U_2 \Subset U_3$, such that $\partial U_1 \subset H$ and $\partial U_2 \subset H$ are $f-$invariant curves and $U_3$ is the polynomially convex hull of $H$ which in this case is the complement of the unbounded component of the complement of $H$.
The image under $\phi_0$ of $\partial U_1$ is a circle, i.e., the boundary of a disk $V_1$.
Similarly, we define $V_2$ to be the disk whose boundary is $\phi_0(\partial U_2)$, and we let $V_3 = B(0, 1)$.
See Figure \ref{figsur}.

\begin{figure}[hbt!]
\captionsetup{width=0.8\textwidth}
  \begin{center}
  \includegraphics[width=0.9\textwidth]{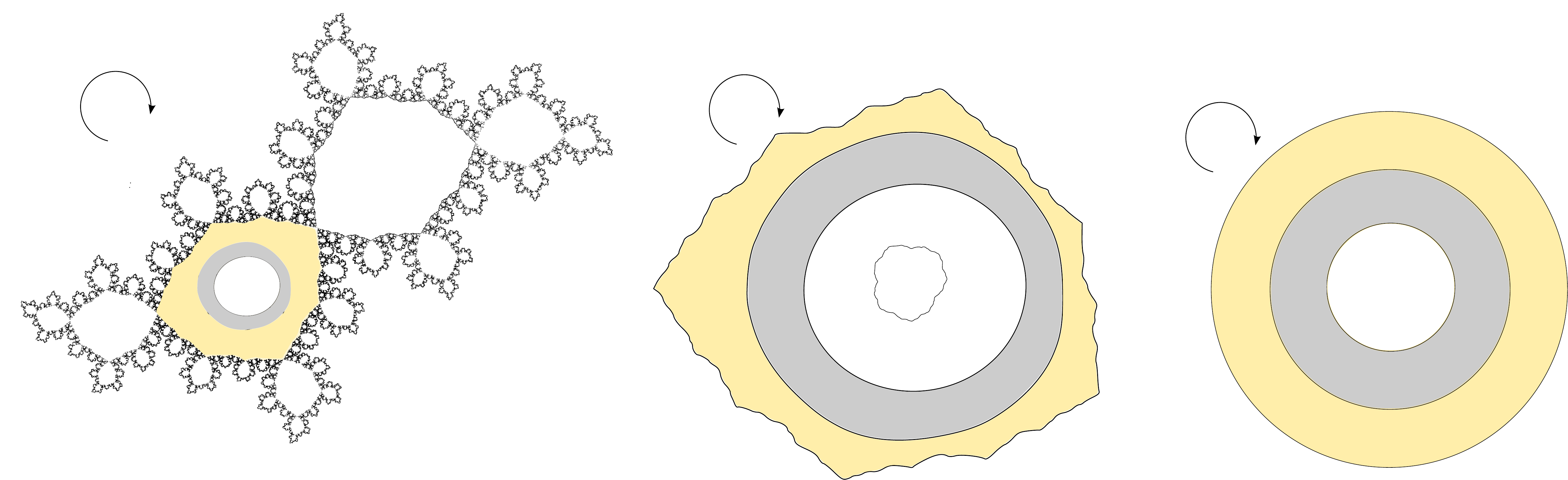}     
  \setlength{\unitlength}{\textwidth}
  \put(-0.64,0.1){\vector(1,0){0.1}}
  \put(-0.59,0.11){$\Phi$}
  \put(-0.54,0.09){\vector(-1,0){0.1}}
  \put(-0.59,0.065){$\Psi$}    
  \put(-0.26,0.1){\vector(1,0){0.05}}
  \put(-0.25,0.11){$\phi_1$}
  \put(-0.23,0.225){$R_\theta$}
   \put(-0.1,0.187){\small $V_3$}
    \put(-0.1,0.155){\small $V_2$}
   \put(-0.1,0.12){\small $V_1$}
\put(-0.49,0.237){$F$}
\put(-0.3,0.05){\scriptsize $U_3$}
\put(-0.34,0.1){\scriptsize $U_1$}
\put(-0.318,0.075){\scriptsize $U_2$}
  \put(-0.85,0.237){$P$}  
  \end{center}
  \caption{\small Quasiconformal surgery to produce the polynomial $P$ with a Siegel disk starting from the rational map $f$ with the Herman ring $H$.} 
    \label{figsur}
\end{figure}

We modify and extend $\phi_0$ as to define it on all of $U_3$. Define $\phi_1 : U_3 \to V_3$ by letting it be equal to $\phi_0$ on $U_3\setminus U_2$, by requiring that it maps $(U_1, 0)$ conformally to $(V_1, 0)$ and interpolating quasiconformally on the annulus $U_2 \setminus U_1$, so we get a quasiconformal mapping $\phi_1 : U_3 \to V_3$, which we shall use to paste the rigid rotation into the Herman ring. 

We have set up the machinery to plug the hole of the Herman ring.
Let
\[
  F := \begin{cases}
    f & \text{ on } \RS \setminus U_2 \\
    \phi_1^{-1} \circ R_\theta \circ \phi_1 & \text{ on } U_2 .
  \end{cases}
\]
This is a model of a quadratic polynomial, since the pole of $f$ no longer exists and the global degree is now two. It is, however, only quasiregular.
To remedy this, we will define an $F-$invariant Beltrami coefficient $\mu$ with bounded dilatation with the intention of applying the Measurable Riemann Mapping Theorem.  This Beltrami coefficient is defined by pieces. We start by defining it in $U_3$, by pulling back the standard Beltrami coefficient $\mu_0=0$ under $\phi_1$, that is $\mu = \phi_1^* (0)$ on $U_3$ or equivalently, $\mu(z)=\bar{\partial} \phi_1 / \partial \phi_1$.
Observe that $\mu$  is invariant by $F|_{U_3}$ by construction, and it has bounded dilatation, precisely that of $\phi_1$. 

We can extend $\mu$ recursively to the backward orbit of $U_3$, by letting $\mu := (F^n)^*(\mu) $ on $F^{-n}(U_3)$, for every $n\geq 1$.
Finally we can extend it to all of $\RS$ by letting $\mu$ vanish outside the backwards orbit of $U_3$.

Since $\mu$ is invariant by $F|_{U_3}$, the extension is invariant by $F$.
Also, since $F$ is analytic outside $\overline{U}_2$ it follows that 
$||\mu||_\infty = ||\mu_{|_{U_3}}||_\infty=: k < 1$.
Hence, $F$ is holomorphic with respect to the almost complex structure defined by $\mu$, and therefore we can apply the Measurable Riemann Mapping theorem (see e.g. \cite[Theorem 1.27]{surbook}) to obtain a 
a quasiconformal homeomorphism 
$\Psi = \Psi_\mu : \RS \to \RS$,
satisfying $\mu  = \Psi^*(0)$ or, equivalently, $\bar{\partial} \Psi = \mu \, \partial \Psi$.
Since $\Psi$ is unique up to fixing the image of three points, we normalize it by requiring
$\Psi (\omega_2) = \omega$,
$\Psi (\phi_1^{-1}(0)) = 0$,
and $\Psi(\infty) = \infty$.

The map  $Q = \Psi \circ F \circ \Psi^{-1}$ is a quadratic polynomial.
Indeed, it is a holomorphic map of degree two having a fixed critical point at infinity. Observe that $0$ is fixed by $Q$. Since $\phi_1$ is conformal on $U_1$, so is $F$, and $F'(0)=R_\theta'(0)=e^{2\pi i \theta}$. This implies that $\Psi$ is also conformal on $U_1$ and hence $Q'(0)=F'(0)= e^{2\pi i \theta}$. Hence $z=0$ is a Siegel point of $Q$.  Additionally, $\omega=-e^{2\pi i \theta}/2$ is a critical point, thus we conclude that $Q=P$.

Let $\Phi := \Psi^{-1}$  and $D:= \Psi(U_2)$ (see Figure \ref{figsur}). We now show that $\Phi$ satisfies properties (a) to (d). in Theorem A.

{\em Property} (a). First notice that $\phi_1 \circ \Phi$ is conformal on $\Psi(U_3)$ and conjugates $P$ to $R_\theta$ on this domain. So $\Psi(U_3) \subset S$.
By maximality of $H$, $\Psi(U_3) = S$, because if $\Psi(U_3)$ were only a subdisk of $S$ then $H$ would not  the maximal domain of linearization.
By construction, $\Phi$ conjugates $P$ to $F$ everywhere, but since $F = f$ except on $U_2$, $\Phi$ conjugates $P$ to $f$ everywhere except on $D$. 
Finally, since $U_2 \Subset U_3$ we know $D \Subset S$.

{\em Property} (b).
Recall that $A_P(\infty)$ denotes the basin of attraction of infinity of $P$.
We have $\partial A_P = J(P)$.
On $A_P(\infty)$, $\Phi$ conjugates $P$ to $f$. Hence $\Phi(A_P(\infty)) \subset A_f(\infty)$.
Similarly $\Psi$ conjugates $f$ to $P$ on $A_f(\infty)$, and therefore
$A_f(\infty) \subset \Phi(A_P(\infty))$. Hence $\Phi(A_P(\infty)) = A_f(\infty)$, and
$\Phi(J(P)) = \partial \Phi(A_P(\infty)) = \partial A_f(\infty)$.
We have already seen that $S = \Psi(U_3)$.
Hence $\Phi(\partial S) = \partial U_3 = \partial H_2$.
That $\Phi(\omega) = \omega_2$ is evident from the normalization of $\Psi$.

{\em Property} (c).  By construction, $\bar{\partial}\Psi = 0$ on the complement of
$\widehat{U}=\cup_{n=0}^\infty f^{-n}(U_2)$, and thus locally conformal on the complement of the closure of this set.
Hence the inverse map $\Phi$ is locally conformal on the complement of the closure of $\GOD = \Psi(\widehat{U})$.
The boundary of $\GOD$ consists of a countable number of closed real analytic curves as well as the boundary of $J(P)$.
By Petersen's theorem, Theorem \ref{Petersen}, the boundary of $\GOD$ has measure zero.
Thus we can conclude $\bar{\partial} \Phi = 0$ almost everywhere on the complement of $\GOD$.

{\em Property} (d).
In view of Theorem \ref{reg}, it is enough to prove that $\omega$ is a measurable deep point  in the complement of $\GOD$.
To show this, McMullen has done the heavy lifting by proving Theorem \ref{deep}: $\omega$ is a measurable deep point in $S_\epsilon$.  Since $D$ is a definite distance away from $\omega$ and again using the $J(P)$ has measure zero, we deduce that $\omega$ is a measurable deep point of $S_\epsilon \setminus \left( D \cup J(f) \right)$.
Hence it is enough to show that $S_\epsilon \setminus \left( D \cup J(f) \right) \subset \left(\C\setminus \GOD\right)$.

The invariant set $D$ has two preimages; $D$ itself and another one $D'$.
Choosing $\epsilon$ small enough, we can assume $S_\epsilon$ does not meet $D'$.
Any point $z \in S_\epsilon \setminus \left( D \cup J(f) \right)$ lies in the Fatou set, and is thus eventually mapped into $S$.
Clearly, $z$ is not an element of $D \cup D'$.
However, since $S_\epsilon$ is forward invariant, none of the iterates of $P$ can map $z$ into $D'$.
So $z$ is not an element of $\GOD$. This concludes the proof of Theorem A.

In the course of the proof, we showed that $\omega$ is a measurable deep point $\C \setminus \GOD$.
We shall use this later, so we formally state it.

\begin{lemma} \label{omegadeep}
  The critical point $\omega$ is a measurable deep point of $\C \setminus \GOD$. 
\end{lemma}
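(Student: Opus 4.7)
The plan is to consolidate the argument that was already carried out inside the proof of part (d) of Theorem A. The main ingredient is McMullen's Theorem \ref{deep}, which guarantees that for every sufficiently small $\epsilon > 0$ the critical point $\omega$ is a measurable deep point of the forward-invariant set $S_\epsilon \subset K(P)$. Once this is combined with Petersen's theorem that $J(P)$ has zero area, what remains is a set-theoretic inclusion.

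First I would fix $\epsilon > 0$ small enough to simultaneously guarantee two things: that $\mathrm{dist}(\omega, D) > 2\epsilon$, which is possible since $D \Subset S$ while $\omega \in \partial S$; and that $S_\epsilon$ is disjoint from $D'$, the unique preimage of $D$ under $P$ other than $D$ itself, which is possible because $D'$ has positive distance from $\overline{S}$.

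Next I would verify the inclusion $S_\epsilon \setminus (D \cup J(P)) \subset \C \setminus \GOD$. If some $z$ in the left-hand side lay in $\GOD$, pick the smallest $n \geq 1$ with $P^n(z) \in D$; forward invariance of $S_\epsilon$ places $P^{n-1}(z)$ in $S_\epsilon \cap P^{-1}(D)$, and minimality of $n$ (together with $z \notin D$ when $n=1$) forces $P^{n-1}(z) \in D'$, contradicting the choice of $\epsilon$.

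Finally, from measurable depth of $\omega$ in $S_\epsilon$ I would deduce measurable depth in $S_\epsilon \setminus (D \cup J(P))$: removing $D$ costs nothing because $D \cap B(\omega, 2\epsilon) = \emptyset$, and removing $J(P)$ costs nothing by Petersen's Theorem \ref{Petersen}. Using the elementary observation recorded just after the definition of measurable deep point, namely that measurable depth passes to any superset, the inclusion above upgrades this to measurable depth in $\C \setminus \GOD$. No genuine obstacle is expected; the only care needed is the bookkeeping in the choice of $\epsilon$ and the verification that $S_\epsilon$ avoids $D'$.
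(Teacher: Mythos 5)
Your proposal is correct and follows the same route as the paper's own argument inside the proof of Theorem~A, part~(d): combine McMullen's Theorem~\ref{deep} with Petersen's Theorem~\ref{Petersen} and the inclusion $S_\epsilon \setminus (D \cup J(P)) \subset \C \setminus \GOD$. You have merely supplied a couple of details that the paper leaves implicit (the reason $\epsilon$ can be chosen so that $S_\epsilon$ avoids $D'$, and the ``smallest $n$'' argument behind the inclusion), and you silently correct a typo where the paper writes $J(f)$ in place of $J(P)$.
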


\section{Preliminaries about $C^{1+\alpha}$-conformal mappings and tightly similar sets}
To prove Theorem B, we need to establish some elementary properties of $C^{1+\alpha}$-conformal homeomorphisms and tightly similar sets.
We see in this section that the two notions complement each other well.

First we prove that $C^{1+\alpha}$ regularity extend to inverses when the map in question is quasiconformal.

\begin{proposition} \label{inverse}
  Let $\phi : U \to V$ be a homeomorphism between the open sets $U$ and $V$, and suppose $\phi$ is $C^{1+\alpha}$-conformal at $z_0 \in U$ with  $\phi'(z_0) \neq 0$.  Then $\phi^{-1}$ is $C^{1+\alpha}$-conformal at $\phi(z_0)$.
\end{proposition}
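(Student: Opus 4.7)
The plan is to apply the defining expansion of $\phi$ at $z_0$ in reverse, solving for $\phi^{-1}(w)-\phi^{-1}(w_0)$ and bounding the error. Writing $w_0=\phi(z_0)$, $a=\phi'(z_0)\neq 0$, and $z=\phi^{-1}(w)$, the hypothesis gives
\[
  w-w_0 \;=\; a(z-z_0)+R(z),\qquad |R(z)|\le M|z-z_0|^{1+\alpha}
\]
for $|z-z_0|<\delta$. Rearranging yields the exact identity
\[
  \phi^{-1}(w)-\phi^{-1}(w_0)-\tfrac{1}{a}(w-w_0)\;=\;-\tfrac{1}{a}R(z),
\]
so the whole task is to control $|R(z)|$ in terms of $|w-w_0|$.

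The first step is to ensure that $|z-z_0|$ stays inside the neighborhood of validity. This is precisely where the homeomorphism assumption is used: $\phi^{-1}$ is continuous at $w_0$, so there exists $\delta_1>0$ such that $|w-w_0|<\delta_1$ forces $|z-z_0|<\delta$. Shrinking $\delta_1$ further if necessary, I can also arrange $M|z-z_0|^\alpha\le |a|/2$, which is the quantitative input needed in the next step.

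The second step is a standard ``invertibility'' argument at the single point $z_0$: from the expansion,
\[
  |w-w_0|\;\ge\;|a|\,|z-z_0|-|R(z)|\;\ge\;|a|\,|z-z_0|-\tfrac{|a|}{2}|z-z_0|\;=\;\tfrac{|a|}{2}|z-z_0|,
\]
hence $|z-z_0|\le \tfrac{2}{|a|}|w-w_0|$. Substituting this into the bound on $|R(z)|$ gives
\[
  |R(z)|\;\le\;M\Bigl(\tfrac{2}{|a|}\Bigr)^{1+\alpha}|w-w_0|^{1+\alpha}.
\]

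Plugging this into the displayed identity shows
\[
  \phi^{-1}(w)\;=\;\phi^{-1}(w_0)+\tfrac{1}{a}(w-w_0)+\widetilde R(w),\qquad |\widetilde R(w)|\le M'\,|w-w_0|^{1+\alpha},
\]
with $M'=\frac{M}{|a|}(2/|a|)^{1+\alpha}$, valid whenever $|w-w_0|<\delta_1$. This is exactly the statement that $\phi^{-1}$ is $C^{1+\alpha}$-conformal at $w_0$, with derivative $1/a$. There is no serious obstacle; the only subtlety is remembering to use the homeomorphism property to guarantee $z$ lies in the domain of validity of the expansion before starting the estimates, since the expansion alone at $z_0$ says nothing about what happens to preimages of small neighborhoods of $w_0$.
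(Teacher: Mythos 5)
Your proof is correct and takes essentially the same approach as the paper: solve the expansion $w-w_0=a(z-z_0)+R(z)$ for $\phi^{-1}(w)$, use continuity of $\phi^{-1}$ to stay in the domain of validity, bound $|z-z_0|$ above by a constant times $|w-w_0|$ via the reverse triangle inequality, and feed that back into the remainder bound. The only cosmetic difference is that the paper first normalizes to $z_0=\phi(z_0)=0$ and $\phi'(0)=1$, while you carry the constant $a$ explicitly.
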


\begin{proof}
We can assume that $\phi'(0) = 1$, so by hypothesis
\[
  \phi(z) = z + R(z), \text{ where } |R(z)| < M |z|^{1+\alpha} \text{ when } |z| < \delta
\]
for some $M$ and $\delta > 0$.

When $|z|$ is small enough, $|\phi(z)| \geq |z|/2$.
Hence there exists $\delta' > 0$ so that when $w \in B(0, \delta')$, $|\phi^{-1}(w)| \leq 2|w|$ and $\phi^{-1}(w) \in B(0, \delta)$.
Since $w = \phi(\phi^{-1}(w))$ we can write
\[ w= \phi^{-1}(w) + R(\phi^{-1}(w))\]
and hence we have 
\[ \phi^{-1}(w) = w - R(\phi^{-1}(w)) = w + \tilde{R}(w),
\]
with 
\[
|\tilde{R}(w)| = |R(\phi^{-1}(w)) | \leq M |\phi^{-1}(w)|^{1+\alpha} \leq M |2 w|^{1+\alpha}
\]
when $w \in B(0, \delta')$, which concludes the proof. \qedprop
\end{proof}

\begin{proposition} \label{nested}
  Let $A \subset B \subset C$ be compact sets.
  Then $A \tsim{z_0} C$ implies $A \tsim{z_0} B \tsim{z_0} C$.
\end{proposition}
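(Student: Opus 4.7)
The proof is essentially a direct unpacking of the definition together with the set-theoretic containments $A \subset B \subset C$. The plan is to verify the two inequalities of Definition \ref{tight} for the pair $(A,B)$ and for the pair $(B,C)$ separately, each time exploiting that a containment makes one of the two distance conditions automatic, and that distance to a smaller set dominates distance to a larger set.

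Fix constants $\delta, \beta, L > 0$ witnessing $A \tsim{z_0} C$. First I would handle $A \tsim{z_0} B$. For any $a \in A \cap B(z_0,\delta)$, since $A \subset B$ we have $d(a, B) = 0$, so condition (1) holds trivially. For any $b \in B \cap B(z_0, \delta)$, the containment $B \subset C$ gives $b \in C \cap B(z_0, \delta)$, and condition (2) of $A \tsim{z_0} C$ yields $d(b, A) \leq L |b - z_0|^{1+\beta}$, as required.

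Next I would handle $B \tsim{z_0} C$. For any $b \in B \cap B(z_0, \delta)$, the containment $B \subset C$ gives $d(b, C) = 0$, so condition (1) is trivial. For any $c \in C \cap B(z_0, \delta)$, the key observation is that $A \subset B$ implies $d(c, B) \leq d(c, A)$, and condition (2) of $A \tsim{z_0} C$ gives $d(c, A) \leq L |c - z_0|^{1+\beta}$, so condition (2) of $B \tsim{z_0} C$ holds with the same constants.

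There is no real obstacle here; the statement is essentially a monotonicity principle for tight similarity saying that the tight similarity relation passes to any intermediate set sandwiched between two tightly similar ones. The only mild point to note is that the same constants $\delta, \beta, L$ work for both derived relations, so no refinement of parameters is needed. Once this proposition is recorded, it combines cleanly with Proposition \ref{inverse} and the equivalence relation property (stated just after Definition \ref{tight}) when transferring tight similarity of $J(P)$ through $\Phi$ to the various nested sets $\partial A_f(\infty) \subset J(f)$ in Theorem B.
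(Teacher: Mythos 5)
Your proof is correct and is essentially the same as the paper's: in each of the two derived relations one inequality is made trivial by the set inclusion, and the remaining inequality is obtained from condition (2) of $A \tsim{z_0} C$ with the same constants. The only cosmetic difference is that you phrase the $B \tsim{z_0} C$ step via the monotonicity $d(c,B) \leq d(c,A)$, whereas the paper phrases it as finding a witness $a \in A \subset B$; these are the same observation.
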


\begin{proof} Let $\beta, \delta,$ and $K$ be such that 1. and 2. in definition \ref{tight} holds for $A$ and $C$.

To prove $A \tsim{z_0} B$, it is enough to prove that when $b \in B \cap B(z_0, \delta)$, we can find $a \in A$ such that $|a - b| \leq K |b-z_0|^{1+\beta}$.
But since $b \in C$, and $A \tsim{z_0} C$, we can find such $a$.

To prove $B \tsim{z_0} C$, it is enough to prove that when $C \in C \cap B(z_0, \delta)$, we can find $b \in B$ such that $|c - b| \leq K |c-z_0|^{1+\beta}$.
Since $b \in C$, and $A \tsim{z_0} C$, we can find such $b \in A \subset B$, satisfying the  inquality. \qedprop
\end{proof}

We end this section by seeing that tight similarity at a point is preserved by homeomorphisms which are $C^{1+\alpha}$-conformal at the point.

\begin{proposition} \label{preservesim}
  Let $A$ be a compact set and $\phi$ a homeomorphism which is $C^{1+\alpha}$-conformal at $z_0$.
  \begin{enumerate}
    \item If $\phi(z_0)=z_0$, $\phi'(z_0)=1$ then $A \tsim{z_0} \phi(A)$.
    \item If $\phi'(z_0) \neq 0$ and $B$ is a compact such that $A \tsim{z_0} B$, then $\phi(A) \tsim{\phi(z_0)} \phi(B)$.
  \end{enumerate}
\end{proposition}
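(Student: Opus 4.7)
The plan is to prove part (1) directly from the Taylor-type expansion provided by the definition of $C^{1+\alpha}$-conformality, and then to derive part (2) by an affine normalization that reduces it to (1).

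For (1), after translating so that $z_0=0$, I would write the hypothesis as $\phi(z)=z+R(z)$ with $|R(z)|\leq M|z|^{1+\alpha}$ on some ball $B(0,\delta)$. The first inequality in Definition \ref{tight} is then immediate: for $a\in A\cap B(0,\delta)$ the point $\phi(a)$ lies in $\phi(A)$ at distance $|R(a)|=O(|a|^{1+\alpha})$ from $a$. For the second inequality I would invoke Proposition \ref{inverse} to obtain an analogous expansion $\phi^{-1}(w)=w+\widetilde R(w)$ with $|\widetilde R(w)|\leq \widetilde M|w|^{1+\alpha}$ on a possibly smaller ball around $0$, and then run the identical argument with the roles of $A$ and $\phi(A)$ swapped. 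Picking the smaller of the two radii yields a common $\delta$ on which both inequalities of Definition \ref{tight} hold simultaneously, with $\beta=\alpha$ and a single constant $L$.

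For (2), I would reduce to (1) via the affine map $L(z)=\phi'(z_0)(z-z_0)+\phi(z_0)$. The composition $\psi:=L^{-1}\circ\phi$ is a homeomorphism that is $C^{1+\alpha}$-conformal at $z_0$ with $\psi(z_0)=z_0$ and $\psi'(z_0)=1$, so part (1) applies and gives $A\tsim{z_0}\psi(A)$ and $B\tsim{z_0}\psi(B)$. Combining these with the hypothesis $A\tsim{z_0}B$ and the fact, recorded just before the proposition, that $\tsim{z_0}$ is an equivalence relation, I obtain $\psi(A)\tsim{z_0}\psi(B)$. It then remains to observe that a non-constant affine map $L(z)=cz+d$ carries tight similarity to tight similarity, moving the basepoint from $z_0$ to $L(z_0)$ and rescaling the constant by a factor of $|c|^{-\beta}$; the verification is the one-line substitution $a'=L(a)$ in the defining inequality. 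Applying this observation with $c=\phi'(z_0)$ to $\psi(A)\tsim{z_0}\psi(B)$ yields $\phi(A)\tsim{\phi(z_0)}\phi(B)$.

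The main obstacle is bookkeeping rather than any substantive idea. In (1) the radii on which the forward and inverse Taylor estimates hold differ, so one must choose a common neighborhood of $z_0$ on which both hold, and combine the two estimates into a single triple $(\delta,\beta,L)$ satisfying Definition \ref{tight}. In (2) one must check that both asymmetric inequalities in the definition of tight similarity survive pre- and post-composition by the affine map and by $\psi$; since the exponent $\alpha$ from $\psi$ and the exponent $\beta$ from the similarity $A\tsim{z_0}B$ need not agree, the surviving exponent in $\phi(A)\tsim{\phi(z_0)}\phi(B)$ will be $1+\min(\alpha,\beta)$, which is still strictly greater than $1$, as required.
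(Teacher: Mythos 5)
Your argument is correct and follows the same route as the paper: part (1) from the Taylor-type expansion of $\phi$ and of $\phi^{-1}$ (via Proposition \ref{inverse}), and part (2) by normalizing $\phi$ to the identity derivative at $z_0$ and then chaining $\phi(A)\tsim{z_0}A\tsim{z_0}B\tsim{z_0}\phi(B)$ using transitivity. Your treatment is in fact a bit more careful than the paper's at the normalization step in (2) — you introduce the affine map $L$ explicitly and check that it transports tight similarity and the basepoint, whereas the paper only invokes scaling and leaves the translation to $\phi(z_0)$ implicit — and you also flag correctly that the resulting exponent is $\min(\alpha,\beta)$.
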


\begin{proof}
To see the first statement, notice that for any $a \in A \cap B(z_0, \delta)$, we have 
$ \phi(a)=  a  + R(a)$. Hence
\[
|a - \phi(a)| \leq M |a-z_0|^{1+\alpha}
\]
for some $M>0$. This is the first of the requirements of Definition \ref{tight}. 
The second requirement follows by applying the same argument to $\phi^{-1}$, which is $C^{1+\alpha}$-conformal by Proposition \ref{inverse}.

We can deduce the second statement from the first.
Since scaling $A$ and $B$ around $z_0$ does not change whether they are tightly similar or not, we can suppose $\phi'(z_0)=1$.
Then
\[
  \phi(A) \tsim{z_0} A \tsim{z_0} B \tsim{z_0} \phi(B) ,
\]
and the two image sets are tightly similar. \qedprop
\end{proof}

\begin{remark}\label{tightselfsim2}
From this proposition, together with Theorem  \ref{selfsim} , we can see that $S$ is tightly self similar if $\theta$ is a quadratic irrational. Indeed, let $s$ be the period of the continued fraction, and assume it is even. Let $\psi$ be the homeomorpfism and $\kappa$ the scaling factor in Theorem \ref{selfsim}, which ensure the self-similarity of $S$. Then, the map 
\[
h(z) = (1/\kappa) (\psi(z+\omega) - \omega)
\]
sends $\kappa (\partial S-\omega)$ to $\partial S - \omega$,
and is of the form $h(z) = z + O(|z|^{1+\alpha})$. It follows from Proposition \ref{preservesim} 
that  $\partial S - \omega \tsim{0} \kappa(\partial S - \omega)$.
When $s$ is odd, $h(z) = (1/\kappa) (\overline{\psi(z + \omega) - \omega})$ gives the tight self-similarity between 
$\partial S - \omega$ and  $\kappa (\partial S - \omega)^*$,
where $(\partial S - \omega)^*$ denotes the set obtained from first translating $\partial S$ and then reflecting in the real axis. 
\end{remark}

\section{Geometry invariance.\\Proof of Theorem B and Corollary \ref{extended}}
In this section we prove Theorem B.
Most of the statements are fairly obvious by the results we have uncovered by now.
The delicate part is to prove that $\partial A_f(\infty) \tsim{\omega_2} J(f)$.

Recall that $\Phi$ denotes the quasiconformal homeomorphism given in Theorem A, conjugating $P$ to $f$ on $\widehat{\mathbb C} \setminus D$, which is conformal outside $\mathcal{D}=\cup_{n=0}^\infty P^{-n}(D)$. 
Our strategy will be to prove that $J(P) \tsim \omega \overline{\GOD}$.
Since $\Phi$ is $C^{1+\alpha}$ -conformal at $\omega$, $\Phi(J(P)) = \partial A_f(\infty)$, and we shall see that $\Phi(\overline{\GOD})$ contains $J(f)$, this will be sufficient to prove the desired similarity by Propositions \ref{nested} and \ref{preservesim}.

The idea in proving $J(P) \tsim{\omega} \overline{\GOD}$ is that the area of components of $\GOD$ must quickly decrease as we approach $\omega$.
By bounded geometry, this means that the diameter of the components of $\GOD$ must also quickly decrease.
Each connected component $D'$ of $\GOD$ is contained in a connected component $F'$ of the Fatou set $F(P)$.
The modulus of the annulus $F' \setminus D'$ is independent of which component $D'$ we are considering and so for any $z \in D'$ the distance $d(z, \partial F')$ is comparable to the diameter of $D'$, i.e., quickly decreasing as we approach $\omega$. This is the idea, it remains to fill in the details.

\begin{definition}\label{inout}
Let $U$ be a bounded simply connected domain and $z$ a point in $U$.
The \emph{inner radius} $\inR (U, z)$ is given by
\[
  \sup \left\{ r : B(z, r) \subset U \right\}
\]
whereas the \emph{outer radius} $\outR (U, z)$ is
\[
  \inf \left\{ r: U \subset B(z, r) \right\}
\]
\end{definition}

\begin{trivlist}
\item {\em Claim 1.}
There exists $K_1 \in (1, \infty)$ such that for any connected component $D'$ of $\GOD$,
$$
1 \leq \frac{ \outR(D', \alpha') } { \inR(D', \alpha') } \leq K_1,
$$
where $\alpha'$ denotes the preimage of the Siegel point lying in $D'$.
\end{trivlist}

The claim follows from Koebe's Distortion Theorem (see e.g. \cite{carlesongamelin}) .
Let $\phi : S \to B(0, 1)$ denote a map linearizing $P$, and $F'$ the component of the Fatou set $F(P)$ that contains $D'$.
When $n$ is sufficiently large, $\phi \circ P^n : F' \to B(0, 1)$ is a conformal isomorphism and, by construction of $D'$ (see Section \ref{surg}), it maps $D'$ onto a round disk $B(0, s)$, and $\alpha'$ to the origin.
Applying Koebe's Distortion Theorem to the inverse map, we get
\[ 
  \frac{ \outR(D', \alpha') } { \inR(D', \alpha') } 
  \leq \left( \frac{1+s}{1-s} \right)^2  =:K_1. 
\]
  
\begin{trivlist}
\item {\em Claim 2.}
There exists $K_2, K_3 \in (0, \infty)$ such that for any connected component $D'$ of $\GOD$,
\[
  K_2 \sqrt{\Ar D'} \leq \outR(D', \alpha') \leq K_3 \sqrt{\Ar D'}
\]
where $\alpha' \in D'$ is eventually mapped to the Siegel point.
\end{trivlist}
The claim is trivial, since $\pi$ times in-radius squared gives a lower bound on the area, $\pi$ times the out-radius squared gives an upper bound, and in-radius and out-radius are comparable by Claim 1.

\begin{trivlist}
\item {\em Claim 3.}
There exists $K_4 \in (2, \infty)$ such that for any connected component $D'$ of $\GOD$, and any $z \in D'$, 
\[d(z, J(P)) < K_4 \outR(D', \alpha'), \]
where $\alpha'$ is the point in $D'$ that is eventually mapped to the Siegel point.
\end{trivlist}
This claim is a consequence of Gr\"oztsch's inequality.
Letting $F'$ denote the component of the Fatou set that contains $D'$, the modulus of $F' \setminus \overline{D}'$ is equal to the modulus $m$ of $S \setminus \overline{D}$.
Put $\outR = \outR(D', \alpha')$.
There exists a constant $c >1$ only depending on $m$, such that the modulus of $B(\alpha', c\outR) \setminus \overline{B}(\alpha', \outR)$ equals $2m$ (we could have chosen any other number larger than $m$).
If $B(\alpha', c\outR)$ were  contained in $F'$, then $B(\alpha', c\outR) \setminus \overline{B}(\alpha', \outR)$ would be contained in $F' \setminus \overline{D}'$ in contradiction with Gr\"oztsch's inequality.
So there exists $z' \in \partial F' \subset J(P)$ with $|z' - \alpha'| \leq c\outR$.
By the triangle inequality, $|z - z'| \leq |z - \alpha'| + |\alpha' - z'| \leq \outR + c\outR$, which proves the claim, with $K_4:=c+1$.

\vspace{0.3cm}

Lemma \ref{omegadeep} tells us that $\omega$ is measurable deep in the complement of $\GOD$, i.e. there exist constants $\beta, \delta > 0$ and $M$ so $\Ar (\GOD \cap B(\omega, r)) \leq M r^{2+\beta}$, when $r< \delta$.
Pick $r_0 > 0$ small enough so that
\begin{itemize}
  \item $r_0 < \frac{\delta}{4}$, and
  \item letting $\gamma = \left( 1+\beta/2 \right)^{-1} < 1$, 
      $2  \left(K_3 \sqrt{M}\right)^\gamma r \leq \frac{1}{2} r^\gamma$, when $r \leq r_0$.
\end{itemize}

\begin{trivlist}
\item {\em Claim 4.}
There exists $\delta', \beta > 0$ and $K$, so that 
\[
d(z, J(P)) \leq K |z-\omega|^{1+\beta/2}
\]
 whenever $z \in \GOD \cap B(z_0, \delta)$.
\end{trivlist}
There are only finitely many components $D'$ of $\GOD$ with $\outR(D', \alpha') > r_0$.
If there were infinitely many, then by Claim 2, there area of $\GOD$ would be infinite,
but $\GOD$ is contained in the filled-in Julia set of $P$, and it is well known that the latter set is contained in $B(\omega, 2)$. Hence we can pick $\delta' > 0$ so that
\begin{itemize}
  \item $\delta' \leq \delta/2$
  \item $B(\omega, \delta')$ does not meet a  component of $\GOD$ with $\outR(D', \alpha') > r_0$.
\end{itemize}

Consider an arbitrary $z \in \GOD \cap B(\omega, \delta')$.
Such $z$ lies in a component $D'$ of $\GOD$, and we have chosen $\delta'$ so that $r := \outR(D',\alpha') \leq r_0$.
Notice $D' \subset B(\omega, |z-\omega| + 2 r)$.
As $|z-\omega| < \delta' \leq  \delta/2$, and $2 r \leq 2 r_0 \leq \delta'$, it holds that $D' \subset B(\omega, \delta)$.
So 
$$
\Ar D' \leq \Ar \GOD \cap B(\omega, |z-\omega|+2r) \leq M (|z - \omega| + 2r)^{2+\beta}.
$$
Using Claim 2, we get
\[
  r \leq K_3 \sqrt{M} \left( |z-\omega|+2r \right)^{1+\beta/2}
  \Leftrightarrow 
  r^\gamma - 2 r \left(K_3 \sqrt{M}\right)^\gamma  \leq  \left(K_3 \sqrt{M}\right)^\gamma |z-\omega| .
\]
Since $r \leq r_0$,  and by the choice of $r_0$, the left hand side is bounded from below by $\frac{r^\gamma}{2}$, and we get that
\[
  r \leq K_5 |z - \omega|^{1+\beta/2},
\]
for some $K_5:= \in (0, \infty)$.
By Claim 3, $d(z, J(P)) \leq K_3 K_5  |z - \omega|^{1+\beta/2}$ which proves Claim 4.

\begin{trivlist}
\item {\em Claim 5.}
$J(P) \tsim{\omega} \overline{\GOD}$.
\end{trivlist}
We have just established one of the two requirements of Definition \ref{tight}.
The set $\overline{\GOD}$ is a completely invariant closed set contaning more than two points, so $J(P) \subset \overline{\GOD}$.
Hence, the other requirement is automatically satisfied.

\vspace{0.3cm}
Having proven Claim 5, we have navigated the rough part of the proof; the rest is smooth sailing.
By Theorem A, $\Phi(J(P)) = \partial A_f(\infty)$.
Again using Theorem A we obtain that  $L: = \Phi'(z_0) \neq 0$. Hence, Claim 5 together with Proposition \ref{preservesim} imply that 
\[
\Phi(J(P)) = \partial A_f(\infty) \tsim{\omega_2} \Phi(\overline{\GOD}).
\]
Let $h(z) := z/L+\omega$, and $g(z) := z - \omega_2$. Then, 
the map $\Psi:=g \circ \Phi \circ h$ maps $L(J(P) - \omega)$ onto $\partial A_f(\infty) - \omega_2$.
Moreover $\Psi$  maps the origin to itself, is $C^{1+\alpha}$  and has derivative $1$ at this point. 
So, in view of Proposition \ref{preservesim},
 \[
 L(J(P) - \omega) \tsim{0} \partial A_f(\infty) - \omega_2.
 \]


 Let us now see  that $\partial A_f(\infty) \subset J(f) \subset \Phi(\overline{\GOD})$.
The first inclusion is immediate.
If $z \in J(f) \setminus \Phi(\GOD)$, then $u:=\Phi^{-1}(z)$ has a bounded forward orbit that avoids $D$.
Since $u$ can never be mapped to $S \setminus D$ (otherwise $z$ would eventually be mapped to $H$), we must have $\Phi^{-1}(z) \in J(P) \subset \overline{\GOD}$.
Hence $z \in \Phi(\overline{\GOD})$.

Since $\partial A_f(\infty) \subset J(f) \subset \Phi(\overline{\GOD})$, and $A_f(\infty) \tsim{\omega_2} \Phi(\overline{\GOD})$, Proposition \ref{nested} shows that $\partial A_f(\infty) \tsim{\omega_2} J(f)$. This finishes the proof of Theorem B, statement (a).

\vspace{0.3cm}

To see part (b), notice that $\partial S$ is self-similar by Remark \ref{tightselfsim}.
The mapping $\psi$ of Theorem \ref{selfsim} can be turned into mapping of $\partial H_2$ by letting $\tilde{\psi}=\Phi \circ \psi \circ \Phi^{-1}$.
By Propostion \ref{inverse} this composition is $C^{1+\alpha}$-conformal or anticonformal, and has the same conformal or anticonformal derivative $\kappa$ as $\psi$.
We can conclude that $\partial H_2$ is self-similar in the sense of McMullen.

For part (c), assume that S contains an open triangle $T$ with vertices $a$, $b$ and $\omega$.
Shrinking $T$ we can assume $T \cap \partial S = \left\{ \omega \right\}$.
We know from Theorem A (d), that $\Phi$ is $\C$-differentiable at $\omega$ and  $\Phi'(\omega) \neq 0$. Moreover, by Proposition \ref{inverse}, we have that $\Phi^{-1}$ is also $\C$-differentiable at $\omega_2$.
The image $\Phi([\omega, a])$ is a curve in $H$ having a tangent at the starting point, and $\Phi([\omega, b])$ is a curve in $H$ having a tangent at the starting point.
The angle between the two tangents is the same as the angle of $T$ at $\omega$, since $\Phi'(\omega)$ exists and is non-vanishing.
Thus a line segment in the gap between the the two tangents, will not intersect the two image curves in a small enough neighborhood of $\omega_2$.
Hence there is room for a triangle in $H$ with a vertex at $\omega_2$.
As the inverse map $\Phi^{-1}$ is $\C$-differentiable at $\omega_2$ with non-vanishing derivative, we can use the exact same argument to establish that the existence of a triangle in $H$ with a vertex at $\omega_2$ implies the existence of a triangle in $S$ with a vertex at $\omega$.
\qedprop

\subsubsection*{Proof of Corollary \ref{extended}}
It only remains to prove Corollary \ref{extended}.
The statements of Theorem A and B hold true if we replace $\omega_2$ with $\omega_1$.
When $f$ is viewed as a mapping of the sphere, $\omega_1$ and $\omega_2$ do not play different roles.
If we change coordinates by the map $z \mapsto 1/z$, then $f$ takes the form $g(z) = 1/f(1/z) = b^{-1}\frac{az+1}{z+a}$,
i.e. we get a mapping of the form covered by Theorems A and B.
The change of coordinates interchanges outer and inner boundary of $H$ and $\omega_1$ and $\omega_2$.
More precisely, $z \mapsto 1/z$ maps $\omega_1$ for $f$ to $\omega_2$ for $g$, and $\omega_2$ for $f$ to $\omega_1$ for $g$.
Theorems A and B for $g$  use the change of coordinates to get the approiate statements for $\omega_1$.
In particular, when proving the corollary, we can suppose $f^m(u) = \omega_2$.

Let $k \geq 0$ and $v$ be such that $P^k(v) = \omega$.
Let us first see that there exists an inverse branch of $P$  mapping a neighborhood of $\omega$ conformally onto a neighborhood of $v$. 
Indeed, the boundary of $S$ is included in the accumulation set of the forward orbit of $\omega$ which must be infinite.
So $P^j(v) \neq \omega$, for $j = 0, 1, \ldots, k-1$.
Since $\omega$ is the only zero of $P'$, this implies in particular that $(P^k)'(v) \neq 0$, which guarantees the existence of the inverse branch.

The same argument shows that if $k \geq 0$ and $f^k(v) = \omega_2$, then there exists an invere branch of $f^k$ mapping a neighborhood of $\omega_2$ onto a neighborhood of $v$.

Part (a). We know that $\Phi$ conjugates $P$ to $f$ on $\RS \setminus D$.
Hence $\Phi$ conjugates $P^n$ to $f^n$ on $\RS \setminus \cup_{j=0}^{n-1}(D)$.
That means $\Phi = f^{-n} \circ \Phi \circ P^n$ in a neighborhood of $\omega'$, for a suitably chosen branch of $f^{-n}$.
Since the property of being $C^{1+\alpha}$ at a point is clearly preserved by compostion with conformal mappings, we get that  $f^{-n} \circ \Phi \circ P^n$ is $C^{1+\alpha}$-conformal at $\omega'$.

Part (b). Using a suitably chosen branch of $P^{-n}$ and Propostion \ref{preservesim}, we get 
\[
J(P) -\omega \tsim{0} J(P) - \omega'.
\]
Using a suitably chosen branch of $f^{-m}$ in the same way, we get 
\[
J(f) - \omega_2 \tsim{0} J(f) - u.
\]

Part (c). There exists an inverse branch $f^{-m}$ that maps $\omega_2$ to $u$, and maps the part of the boundary of $H$ that lies in some neighborhood of $\omega_2$ onto the part of the boundary of $H'$ that lies in some neighborhood of $u$.
Using this inverse branch, we can tranport the mapping $\phi$ expressing the self-similarity 
at $\omega_2$ into a map $\tilde{\phi}$ by conjugating with $f^{-n}$.

Part (d). Is also proven using that inverse branches are conformal isomorphism.
We leave the details to the reader.

\bibliographystyle{amsalpha}
\bibliography{hstruct}
\end{document}